\documentclass [10pt, a4paper] {article}
\usepackage[latin2]{inputenc}
\usepackage[T1]{fontenc}
\usepackage{amsfonts}
\usepackage{mathrsfs}
\usepackage{amsmath}
\usepackage{amssymb}
\usepackage{graphicx}
\usepackage{amsthm}
\usepackage{color}
\usepackage[left=4 cm, right=4 cm]{geometry}
\usepackage[colorlinks=true]{hyperref}

\newcommand{\N}{\mathbb{N}}
\newcommand{\Z}{\mathbb{Z}}
\newcommand{\R}{\mathbb{R}}
\newcommand{\D}{\mathbb{D}}

\newcommand{\Lat}{\mathrm{Lat}\,}
\newcommand{\HLat}{\mathrm{HLat}\,}

\newcommand{\irB}{\mathcal{B}}
\newcommand{\irC}{\mathcal{C}}

\newcommand{\irE}{\mathcal{E}}
\newcommand{\irF}{\mathcal{F}}
\newcommand{\irH}{\mathcal{H}}
\newcommand{\irK}{\mathcal{K}}
\newcommand{\irL}{\mathcal{L}}

\newcommand{\irN}{\mathcal{N}}

\newcommand{\irR}{\mathcal{R}}

\newcommand{\irX}{\mathcal{X}}

\newtheorem{thm}{Theorem}
\newtheorem{prop}{Proposition}
\newtheorem{rem}{Remark}
\newtheorem{lem}{Lemma}
\newtheorem{cor}{Corollary}
\newtheorem{exmpl}{Example}

\begin{document}

\begin{center}
\textbf{\LARGE Positive operators arising asymptotically \\ from contractions}

\bigskip

{\Large György Pál Gehér}

\bigskip

\end{center}

\bigskip

\begin{abstract}
In this paper we characterize those positive operators which are asymptotic limits of contractions in strong operator topology or uniform topology. We examine the problem when the asymptotic limits of two contractions coincide.
\end{abstract}

\let\thefootnote\relax\footnote{\textit{AMS Subject Classification Numbers:} 47A45, 47B15, 47B65.

\textit{Keywords:} Asymptotic limit, contraction, positive operator.}

\section{Introduction}

Let $\irH$ be a complex Hilbert space and let $\irB(\irH)$ stand for the C*-algebra of bounded, linear operators on $\irH$. For an operator $T\in\irB(\irH)$ we denote the null space, the range, the spectrum, the point spectrum and the essential spectrum of $T$ by $\irN(T)$, $\irR(T)$, $\sigma(T)$, $\sigma_p(T)$ and $\sigma_e(T)$, respectively. The spectral radius and the essential spectral radius are $r(T)$ and $r_e(T)$, respectively. We write $I$ for the identity operator, and if $\irX$ is a subspace of $\irH$ (i.e. closed linear manifold), then $I_\irX$ stands for the identity operator on $\irX$. The set of natural numbers and the set of non-negative integers are denoted by $\N$ and $\Z_+$, respectively.

If the subspace $\irX$ of $\irH$ has the property $T\irX\subset\irX$, we say that it is invariant for $T$, and write $\irX\in\Lat(T)$. If $\irX$ is invariant for every operator $C\in\irB(\irH)$ which commutes with $T$: $CT=TC$, then we call it a hyperinvariant subspace of $T$, in notation: $\irX\in\HLat(T)$. If $\irX$ is invariant for both $T$ and $T^*$, where $T^*$ denotes the adjoint of $T$, then $\irX$ is reducing for $T$. This means exactly that $\irX$ and its orthogonal complement: $\irX^\perp=\irH\ominus\irX$ are both invariant for $T$. If an operator has only trivial reducing subspaces (i.e. $\irH$ and $\{0\}$), then we call it an irreducible operator.

The operator $T\in\irB(\irH)$ is a contraction if $\|T\|\leq 1$. The vector $h\in\irH$ is \textit{stable} for the contraction $T\in\irB(\irH)$, if the orbit of $h$ converges to 0, i.e. $\lim_{n\to\infty} \|T^n h\| = 0$. The set of all stable vectors will be denoted by $\irH_0=\irH_0(T)$. We recall that $\irH_0$ is a hyperinvariant subspace of $T$, what we call the \textit{stable subspace of $T$}. The contractions can be classified according to the asymptotic behaviour of their iterates and the iterates of their adjoints. Namely, $T$ is \textit{stable} or \textit{of class $C_{0\cdot}$}, when all vectors are stable, in notation: $T\in C_{0\cdot}(\irH)$. If the stable subspace consists only of the null vector, then $T$ is \textit{of class $C_{1\cdot}$} or $T\in C_{1\cdot}(\irH)$. In the case when $T^*\in C_{i\cdot}(\irH)$ ($i=0$ or 1), we say that $T$ is \textit{of class $C_{\cdot i}$} or $T\in C_{\cdot i}(\irH)$. Finally, the class $C_{ij}(\irH)$ stands for the intersection $C_{i\cdot}(\irH)\cap C_{\cdot j}(\irH)$.

From now on, $T\in\irB(\irH)$ always denotes a contraction. Let us consider the sequence $\{T^{*n}T^n\}_{n=1}^\infty$ of positive operators, which is decreasing, so it has a limit in the strong operator topology (SOT):
\[ A_T := \lim_{n\to\infty}T^{*n}T^n. \]
We say that \textit{$A_T$ arises asymptotically from $T$}, or $A_T$ is the \textit{asymptotic limit} of $T$. In the case when this convergence holds in norm, we say that $A_T$ \textit{arises asymptotically from $T$ in uniform convergence} or $A_T$ is the \textit{uniform asymptotic limit} of $T$. We recall that $A_T^{1/2}$ acts as an intertwining mapping in a canonical realization of the unitary and isometric asymptote of the contraction $T$. Here we give the categorical definition of the latter concept. The pair $(X,V)$ is a \textit{contractive (unitary/isometric) intertwining pair} for $T$ if $V\in\irB(\irK)$ is unitary/isometric, $X\in\irB(\irH,\irK), \|X\| \leq 1$ and $X$ intertwines $T$ with $V$: $XT=VX$. A contractive (unitary/isometric) intertwining pair is called \textit{unitary/isometric asymptote} of $T$ if for any other contractive (unitary/isometric) intertwining pair $(Y,U)$ there exists a unique operator $Z$ such that $ZV=UZ$, $\|Z\|\leq 1$ and $Y=ZX$. 

For any contraction $T\in\irB(\irH)$ there exists a unique isometry $V_T\in\irB(\irR(A_T)^-)$ such that $A_T^{1/2}T=V_TA_T^{1/2}$. The pair $(X^+_T,V_T)$ is a realization of the isometric asymptote of $T$, where $X^+_T\in\irB(\irH,\irR(A_T)^-), X^+_Th = A_T^{1/2}h$. If the minimal unitary extension of $V_T$ is $W_T\in\irB(\irK)$, and $X_T\in\irB(\irH,\irK), X_Th = A_T^{1/2}h$, then $(X_T,W_T)$ is a unitary asymptote of $T$. For a detailed study of unitary asymptotes, including other useful realizations (e. g. with the *-residual part of the minimal unitary dilation), we refer to Chapter IX in \cite{NFBK}, \cite{KerchyDouglasAlg} and \cite{KerchyUnitaryAs}.

The main goal of this paper is to give a complete characterization of those positive operators that are asymptotic limits of contractions. We give necessary conditions in the next section, and prove the complete characterization in Section 3. In Section 4 we investigate some connections between contractions which have the same asymptotic limit. We shall use the isometric asymptote and the Sz.-Nagy--Foias functional calculus for this purpose.

\section{Necessary conditions}

First we give some necessary conditions. The following proposition is well known and provides some basic information on $A_T$.

\begin{prop}
For every contraction $T\in\irB(\irH)$ the following statements hold:
\begin{itemize}
\item[\textup{(i)}] $A_T$ is a positive contraction,
\item[\textup{(ii)}] $\irN(A_T)=\irH_0(T):=\{x\in\irH\colon \lim_{n\to\infty}\|T^nx\|=0\}$ is the hyperinvariant subspace of stable vectors,
\item[\textup{(iii)}] $\irN(A_T-I)=\irH_1(T):=\{x\in\irH\colon \lim_{n\to\infty}\|T^nx\|=\|x\|\}$ is the largest invariant subspace where $T$ is an isometry,
\item[\textup{(iv)}] $A_T=0$ if and only if $T\in C_{0\cdot}(\irH)$,
\item[\textup{(v)}] $0\notin\sigma_p(A_T)$ if and only if $T\in C_{1\cdot}(\irH)$.
\end{itemize}
\end{prop}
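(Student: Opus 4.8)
The plan is to reduce the entire statement to two basic facts about $A_T$: the scalar identity
\[ \langle A_Tx,x\rangle=\lim_{n\to\infty}\|T^nx\|^2 \qquad(x\in\irH), \]
and the operator identity $A_T=T^*A_TT$. The first follows by taking the limit in $\langle T^{*n}T^nx,x\rangle=\|T^nx\|^2$; the second follows since $T^*A_TT=\text{SOT-}\lim_n T^{*(n+1)}T^{n+1}=A_T$, multiplication by fixed bounded operators being SOT-continuous on norm-bounded sets. Item (i) I would dispatch first as motivation for these: each $T^{*n}T^n$ is positive and a contraction (because $\|T^n\|\leq\|T\|^n\leq 1$), and $T^{*(n+1)}T^{n+1}=T^{*n}(T^*T)T^n\leq T^{*n}T^n$ since $T^*T\leq I$; hence $\{T^{*n}T^n\}$ is a decreasing sequence in $[0,I]$, it converges in SOT, and the limit satisfies $0\leq A_T\leq I$.

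For (ii), since $A_T\geq 0$ one has $A_Tx=0$ iff $\langle A_Tx,x\rangle=0$, which by the scalar identity is equivalent to $\lim_n\|T^nx\|=0$, i.e. to $x\in\irH_0(T)$; thus $\irN(A_T)=\irH_0(T)$, which in particular shows $\irH_0(T)$ is a (closed) subspace. Hyperinvariance is then a one-line check: if $CT=TC$ and $x\in\irH_0(T)$, then $\|T^nCx\|=\|CT^nx\|\leq\|C\|\,\|T^nx\|\to 0$. For (iii), note $I-A_T\geq 0$ and $\langle(I-A_T)x,x\rangle=\|x\|^2-\lim_n\|T^nx\|^2$ (the limit exists because $\|T^nx\|$ is nonincreasing), so $(A_T-I)x=0$ is equivalent to $\lim_n\|T^nx\|=\|x\|$, i.e. to $x\in\irH_1(T)$. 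It then remains to see that $\irN(A_T-I)$ is $T$-invariant with $T$ isometric on it, and maximal with these properties. Here I would use $A_T=T^*A_TT$: for $x\in\irN(A_T-I)$,
\[ \langle A_TTx,Tx\rangle=\langle T^*A_TTx,x\rangle=\langle A_Tx,x\rangle=\|x\|^2\geq\|Tx\|^2\geq\langle A_TTx,Tx\rangle, \]
forcing $\|Tx\|=\|x\|$ and $\langle(I-A_T)Tx,Tx\rangle=0$, so $Tx\in\irN(A_T-I)$; maximality is immediate since any $\irM\in\Lat(T)$ on which $T$ is an isometry has $\|T^nx\|=\|x\|$ for all $x\in\irM$, whence $\irM\subset\irH_1(T)$.

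Finally (iv) and (v) are corollaries of (ii): $A_T=0\iff\irN(A_T)=\irH\iff\irH_0(T)=\irH\iff T\in C_{0\cdot}(\irH)$, and $0\notin\sigma_p(A_T)\iff\irN(A_T)=\{0\}\iff\irH_0(T)=\{0\}\iff T\in C_{1\cdot}(\irH)$.

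There is no substantial obstacle in this argument; the only step that is more than a one-line observation is the invariance-and-isometry claim in (iii), and the point there is simply to have recorded $A_T=T^*A_TT$ beforehand so that it is available. Everything else is a direct consequence of the scalar identity together with the positivity of $A_T$ and of $I-A_T$.
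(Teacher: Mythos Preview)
Your argument is correct in every part. The paper itself does not give a proof of this proposition; it simply states ``For the proof see \cite{NFBK} or \cite{Kubrusly}.'' Your approach---reducing everything to the scalar identity $\langle A_Tx,x\rangle=\lim_n\|T^nx\|^2$ and the operator identity $A_T=T^*A_TT$---is exactly the standard one found in those references, so there is nothing to compare.
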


For the proof see \cite{NFBK} or \cite{Kubrusly}. Further necessary conditions are formulated in the next theorem.

\begin{thm} \label{nec_thm_contr}
If the positive operator $0\leq A\leq I$ is the asymptotic limit of a contraction $T$, then one of the following three possibilities occurs:
\begin{itemize}
\item[\textup{(i)}] $A=0$;
\item[\textup{(ii)}] $A$ is a non-zero finite rank projection, $\irH=\irH_0(T)\oplus\irH_1(T)$ and $\dim{\irH_1(T)}\in\N$;
\item[\textup{(iii)}] $\|A\| = r_e(A) = 1$.
\end{itemize}
\end{thm}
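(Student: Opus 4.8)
The plan is to argue by contradiction: assume none of (i), (ii), (iii) holds, and derive an impossibility. So suppose $A = A_T \neq 0$ and $\|A\| = 1$ but $r_e(A) < 1$. Since $A$ is a positive contraction with $r_e(A) < 1$, the spectrum of $A$ in the annulus $(r_e(A), 1]$ consists of isolated eigenvalues of finite multiplicity; in particular $1 \in \sigma_p(A)$ is an eigenvalue of finite multiplicity, so $\irH_1(T) = \irN(A-I)$ is finite-dimensional and nonzero. The goal is then to show that in fact $A$ must be a projection onto $\irH_1(T)$, i.e. that $\irH = \irH_0(T) \oplus \irH_1(T)$, which would put us in case (ii) — contradicting our assumption. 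Equivalently, I want to show that under $r_e(A) < 1$ there can be no part of the spectrum of $A$ strictly between $0$ and $1$.

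First I would record the structural consequences of Proposition~1 and the isometric asymptote machinery from the introduction. Write $\irH = \irN(A) \oplus \irR(A)^-= \irH_0(T) \oplus \irR(A)^-$; since $\irH_0(T)$ is hyperinvariant, $T$ has a triangulation with $T|_{\irH_0}$ of class $C_{0\cdot}$ and the compression to $\irR(A)^-$ of class $C_{1\cdot}$, and the asymptotic limit is unaffected by passing to this $C_{1\cdot}$ part. So I may as well assume $T \in C_{1\cdot}(\irH)$, i.e. $A$ is injective with dense range, $0 \notin \sigma_p(A)$. Now the key object is the isometry $V_T$ on $\irR(A)^- = \irH$ with $A^{1/2} T = V_T A^{1/2}$. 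The finite-dimensional subspace $\irH_1(T) = \irN(A-I)$ is reducing for $T$, $T|_{\irH_1(T)}$ is a unitary operator on a finite-dimensional space, and on $\irH_1(T)$ we have $A^{1/2} = I$, so $V_T$ restricted there coincides (via $A^{1/2}=I$) with this finite-dimensional unitary.

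The heart of the matter: I must show $\sigma(A) \subseteq \{0\} \cup \{1\}$ under the hypothesis $r_e(A) < 1$. Suppose not; then there is an eigenvalue $\lambda \in (0,1)$ of $A$ (finite multiplicity, isolated, since it lies above $r_e(A)$). Let $\irM = \irN(A - \lambda I)$, a nonzero finite-dimensional space. The idea is to exploit the intertwining $A^{1/2} T = V_T A^{1/2}$ together with the (decreasing) convergence $T^{*n} T^n \to A$. Because $\lambda < 1$, the wandering / asymptotic estimates force $A^{1/2} T$ to shrink vectors of $\irM$ strictly: for $x \in \irM$, $\|V_T A^{1/2} x\| = \|A^{1/2} x\| = \sqrt{\lambda}\,\|x\|$, so $\|A^{1/2} T x\| = \sqrt\lambda \|x\|$ as well. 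Iterating, $\|A^{1/2} T^n x\|$ stays comparable to $\sqrt\lambda\|x\|$, while $\|T^n x\|^2 \to \langle A x, x\rangle = \lambda \|x\|^2$; one extracts a contradiction by comparing the rate at which $T^{*n}T^n$ decreases on the finite-dimensional-related part with the spectral-gap information $r_e(A)<1$ — essentially, the operators $T^{*n}T^n - A$ are positive and tend to $0$ strongly, but the spectral gap below $1$ combined with finite multiplicity should upgrade this to norm convergence on the relevant reducing subspace, forcing $A|_{\irR(A)^-}$ to have $1$ as its only spectral value, i.e. $A$ a finite-rank projection after all. I expect the main obstacle to be precisely this upgrade: showing that when $r_e(A) < 1$ the "intermediate" spectrum of $A$ is genuinely empty, rather than merely that each intermediate eigenspace is finite-dimensional. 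The cleanest route is probably to localize to the spectral subspace of $A$ for the interval $[\delta, 1]$ with $r_e(A) < \delta < 1$: this is finite-dimensional and reducing for $A$, nearly reducing for $T$, and on it one can run an explicit finite-dimensional argument (a unitary-plus-strict-contraction decomposition) to see that any eigenvalue in $[\delta,1)$ is incompatible with $A$ being the strong limit of $T^{*n}T^n$ — because on a finite-dimensional $T$-reducing piece, strong convergence is norm convergence, and $T^{*n}T^n$ decreasing with $\|T\|\le 1$ pins the limit to be a projection.
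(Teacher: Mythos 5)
Your proposal has two genuine gaps, and they sit exactly where the paper's proof does its real work. First, you never prove that $\|A_T\|=1$ whenever $A_T\neq 0$; you simply write ``suppose $A\neq 0$ and $\|A\|=1$ but $r_e(A)<1$'' as though this were the negation of (i) and (iii). The negation also allows $0<\|A\|<1$, and ruling that out is the first half of the paper's proof: for a unit vector $v\notin\irH_0(T)$ one has $\|A^{1/2}T^kv\|=\lim_n\|T^{n+k}v\|=\delta^{1/2}$ while $\|T^kv\|\to\delta^{1/2}$, so the ratio $\|A^{1/2}T^kv\|/\|T^kv\|$ tends to $1$ and hence $\|A\|=1$. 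Moreover, this dichotomy ($A_T\neq0$ implies $\|A_T\|=1$) is not only needed to justify your case split; it is the tool that finishes case (ii). In the paper, once $1$ is an isolated point of $\sigma(A)$ with finite-dimensional eigenspace, $\irH_1(T)=\irN(A-I)$ is finite dimensional, $T$ is isometric hence unitary on it, so it reduces $T$; writing $T=T'\oplus U$ gives $A=A_{T'}\oplus I_{\irH_1(T)}$ with $\|A_{T'}\|<1$, and the dichotomy applied to the contraction $T'$ forces $A_{T'}=0$, i.e.\ $A$ is a finite rank projection. Your sketch contains no substitute for this step.

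Second, your route to a contradiction is flawed where it is concrete and admittedly incomplete where it is not. From $\sigma(A)\not\subseteq\{0,1\}$ you infer an isolated eigenvalue $\lambda\in(0,1)$ of finite multiplicity ``since it lies above $r_e(A)$'' --- but intermediate spectrum need not lie above $r_e(A)$ and need not consist of eigenvalues at all (e.g.\ $\sigma(A)=[0,\tfrac12]\cup\{1\}$ with $r_e(A)=\tfrac12<1$ is compatible with your standing assumptions). Consequently, localizing to the finite-dimensional spectral subspace for $[\delta,1]$ with $r_e(A)<\delta<1$ can at best show that $1$ is isolated in $\sigma(A)$; it cannot show that the part of $A$ below the spectral gap vanishes, which is exactly what (ii) asserts. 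In addition, the claim that this spectral subspace is ``nearly reducing for $T$'' is unsubstantiated: only $\irN(A-I)=\irH_1(T)$ is known to be invariant for $T$; eigenspaces of $A$ for eigenvalues in $(0,1)$ are not invariant in general, and the identity $T^*AT=A$ only controls Rayleigh quotients, not spectral subspaces. The concluding moves (``one extracts a contradiction by comparing the rate\dots'', ``should upgrade this to norm convergence'', ``I expect the main obstacle to be precisely this upgrade'') are hopes rather than arguments. (Your preliminary reduction to $T\in C_{1\cdot}$ is correct --- for $x\perp\irH_0(T)$ one does have $\lim_n\|T^nx\|=\lim_n\|T_2^nx\|$ for the compression $T_2$ --- but it does not repair the issues above.)
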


\begin{proof}
Suppose that $A\neq 0$. We will show that $\|A\|=1$ in this case. To this end set $v\in\irH\setminus\irH_0(T)$, $\|v\|=1$; then
\[ \|A^{1/2}v\|^2 = \big<A v, v\big> = \lim_{n\to\infty}\|T^n v\|^2 = \delta \in (0,1]. \]
For all $k\in\N$, we have
\[ \|A^{1/2}T^k v\| = \lim_{n\to\infty}\|T^{n+k} v\| = \delta^{1/2}, \]
and so
\[ \lim_{k\to\infty} \frac{\|A^{1/2}T^k v\|}{\|T^k v\|} = 1, \]
which means that $\|A^{1/2}\| = 1$, thus $\|A\| = 1$. For an alternative proof of this fact see Proposition 3.1 in \cite{Kubrusly}.

Now assume that (i) and (iii) don't hold. Then 1 is an isolated point of $\sigma(A)$ and $\dim{\irN(A-I)}=\dim{\irH_1(T)}\in\N$. Since $T$ is an isometry on the finite dimensional invariant subspace $\irH_1(T)$, it must be unitary on it. Therefore $\irH_1(T)$ is reducing for $T$. Now consider the decomposition $T=T'\oplus U$ where $U=T|\irH_1(T)$. Evidently $A=A_{T'}\oplus I_{\irH_1(T)}$ and $\sigma(A_{T'})\subset [0,1)$ which means $\|A_{T'}\|<1$. This yields $A_{T'}=0$ and $\irH_0(T)=\irH_1(T)^{\perp}$, so $A$ is a projection of finite rank $\dim\irH_1$, thus (ii) holds.
\end{proof}

We collect some additional properties of the asymptotic limit in the next remark.

\begin{rem}\label{contr_rem}
\textup{(a) For any set $\{T_\xi\in\irB(\irH_\xi)\}_{\xi\in \Xi}$ of contractions the equation $A_{T}=\sum_{\xi\in \Xi}\oplus A_{T_\xi}$ is obvious, provided $T=\sum_{\xi\in \Xi}\oplus T_\xi$.}

\textup{(b) It is easy to see that in a triangular decomposition
\[ T = \left[\begin{matrix}
T_1 & T_{12}\\
0 & T_2
\end{matrix}\right] \in \irB(\irH'\oplus\irH'') \]
$A_T = A_{T_1}\oplus A_{T_2}$ does not hold in general. A counterexample can be given by the contractive bilateral weighted shift defined by
\[ Te_k = \left\{\begin{matrix}
e_{k+1} & \text{ for } k>0\\
\frac{1}{2}e_{k+1} & \text{ for } k\leq 0\\
\end{matrix}\right. , \]
where $\{e_k\}_{k\in\Z}$ is an orthonormal basis in $\irH$. Indeed, an easy calculation shows
\[ A_Te_k = \left\{\begin{matrix}
e_k & \text{ for } k>0\\
(\frac{1}{2})^{-2k+2}e_k & \text{ for } k\leq 0\\
\end{matrix}\right. . \]
On the other hand, $\irH_1(T)=\vee_{k>0}\{e_k\}$, and the matrix of $T$ is
\[ T = \left[\begin{matrix}
T_1 & T_{12}\\
0 & T_2
\end{matrix}\right]\]
in the  decomposition $\irH_1(T)\oplus\irH_1(T)^\perp$. Here $T_2\in C_{0\cdot}(\irH_1(T)^\perp)$, so $A_{T_1}\oplus A_{T_2} = I\oplus 0$ is a projection, but $A_T$ is not.}

\textup{(c) The asymptotic limit $A_T$ is a projection if and only if $\irH=\irH_0(T)\oplus\irH_1(T)$, see Section 5.3 in \cite{Kubrusly}. Therefore if $A_T$ is a projection, then $\irH_1(T)$ is reducing for $T$.}

\textup{(d) If $T$ is an irreducible contraction on $\irH$ with $\dim\irH\geq 2$ and the asymptotic limit is a projection, then either $T$ is stable or $T$ is a simple unilateral shift. Indeed, in that case $T$ is either stable or an isometry by (c), but the simple unilateral shift is the only irreducible isometry.}

\textup{(e) If $T$ is a contraction and $U$ is an isometry, then $UTU^*$ is also a contraction and
\[A_{UTU^*}= \lim_{n\to\infty} (UTU^*)^{*n}(UTU^*)^n = UA_TU^*.\]
In particular, two unitarily equivalent contractions have unitarily equivalent asymptotic limits.}

\textup{(f) It can be easily seen that $A_T$ is a projection when $\irH$ is finite dimensional.}
\end{rem}

\section{Sufficiency}

In this section we prove our main theorem. Let $A\in\irB(\irH)$ be an arbitrary positive operator satisfying $0\leq A \leq I$. If $A$ is a projection, then $A$ arises asymptotically from itself, for example. But what can we say about the other cases? In order to give complete characterization, we need two lemmas. We shall write $\underline{r}(A)$ for the minimal element of $\sigma(A)\subset[0,1]$.

\begin{lem}\label{block_diag_same_dim_lemma}
Suppose that the block-diagonal positive contraction $A = \sum_{j=0}^\infty\oplus A_j\in\irB(\irH = \sum_{j=0}^\infty\oplus\irX_j)$ has the following properties:
\begin{itemize}
\item[\textup{(i)}] $\dim{\irX_j}=\dim{\irX_0}>0$ for every $j$,
\item[\textup{(ii)}] $r(A_j) \leq \underline{r}(A_{j+1})$ for every $j$,
\item[\textup{(iii)}] $A_1$ is invertible, and
\item[\textup{(iv)}] $r(A_j)\nearrow 1$.
\end{itemize} 
Then $A$ arises asymptotically from a $C_{\cdot 0}$-contraction in uniform convergence. 
\end{lem}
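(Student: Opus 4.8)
The plan is to build the desired $C_{\cdot 0}$-contraction $T$ as a weighted shift between the blocks $\irX_j$, since all blocks have the same dimension. Fix unitaries $U_j\colon\irX_j\to\irX_{j+1}$, and look for $T$ of the form $Te|_{\irX_j}=U_jW_je$ for suitable positive contractions $W_j\in\irB(\irX_j)$, i.e.\ $T$ maps $\irX_j$ isometrically-up-to-the-weight onto $\irX_{j+1}$. Because the shift structure sends everything ``to infinity'', such a $T$ will automatically be in $C_{\cdot 0}$: the adjoint $T^*$ shifts the blocks backwards, so $\|T^{*n}h\|\to 0$ for every $h$ (this uses that on each block $T^*$ acts as $W_jU_j^*$ which is a contraction, and after $n$ steps one is reading off the ``$(j+n)$-th tail'' of $h$). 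I would verify $\|T\|\le 1$ from $\|W_j\|\le 1$, and irreducibility-type issues are irrelevant here.

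Next I compute $A_T$. A direct calculation with the shift gives that $T^{*n}T^n$ is block-diagonal, and on $\irX_j$ it equals $W_j^2W_{j+1}^2\cdots W_{j+n-1}^2$ (conjugated by the appropriate unitaries, which I can absorb). Hence $A_T$ restricted to $\irX_j$ is the decreasing limit $P_j:=\lim_{n\to\infty}W_j^2\cdots W_{j+n-1}^2$. I want this to equal $A_j$ (up to the fixed unitary identifications). So the heart of the matter is an \emph{infinite-product factorization}: given the target operators $A_j$, choose positive contractions $W_j$ with
\[ A_j = W_j^2 W_{j+1}^2 W_{j+2}^2 \cdots \qu(\text{SOT-convergent decreasing product}), \]
and then arrange that the convergence is in norm, not merely SOT. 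Formally, since $A_j\le A_{j+1}$ is forced by (ii)–(iv) ($r(A_j)\le\underline r(A_{j+1})$ means the spectrum of $A_j$ lies entirely below that of $A_{j+1}$, so in particular $A_j\le A_{j+1}$ after the identification — this is where hypothesis (ii) is really used), one can set $W_{j}^2 := A_{j}^{1/2}A_{j+1}^{-1}A_{j}^{1/2}$ for $j\ge 1$ (using invertibility of $A_1$, hence of all $A_j$, $j\ge 1$, by (iii) and (ii)), which is a positive contraction since $A_j\le A_{j+1}$; a telescoping argument then shows $W_j^2\cdots W_{j+n-1}^2 = A_j^{1/2}A_{j+n}^{-1}A_j^{1/2}\to A_j$ because $A_{j+n}^{-1}\to I$ in norm — here is exactly where (iv), $r(A_j)\nearrow1$, is needed, and it delivers \emph{norm} convergence, giving the uniform asymptotic limit. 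The block $j=0$ is handled separately: pick $W_0$ with $W_0^2=A_0^{1/2}A_1^{-1}A_0^{1/2}$, legitimate since $A_0\le A_1$ and $A_1$ is invertible, even though $A_0$ itself may not be.

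The main obstacle I anticipate is two-fold and both parts live in the previous paragraph. First, making rigorous the passage from the SOT-decreasing product to a genuinely norm-convergent one: the telescoping identity $W_j^2\cdots W_{j+n-1}^2=A_j^{1/2}A_{j+n}^{-1}A_j^{1/2}$ must be checked by induction (it rests on $W_k^2=A_k^{1/2}A_{k+1}^{-1}A_k^{1/2}$ and the cancellation $A_k^{1/2}\cdot A_k^{1/2}=A_k$), and then one needs $\|A_{j+n}^{-1}-I\|\to0$, which follows from $\sigma(A_{j+n})\subset[\underline r(A_{j+n}),1]$ and $\underline r(A_{j+n})\ge r(A_{j+n-1})\to 1$; the uniformity in the block index must be watched so that the convergence $T^{*n}T^n\to A_T$ is in norm on all of $\irH$ simultaneously, not just on each block. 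Second, the bookkeeping of the unitary identifications $U_j$ so that the block-diagonal operator one produces is honestly unitarily equivalent to the prescribed $A=\sum_j\oplus A_j$ — this is routine but must be written carefully. Once these are in place, the $C_{\cdot0}$ property is automatic from the shift form of $T$, and the proof is complete.
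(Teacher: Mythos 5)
Your overall strategy (a block weighted shift sending $\irX_j$ to $\irX_{j+1}$, a telescoping identity reducing $T^{*n}T^n$ on $\irX_j$ to $A_j^{1/2}A_{j+n}^{-1}A_j^{1/2}$, and the uniform bound $\frac{1}{\underline{r}(A_n)}-1\to 0$) is exactly the right one, and your $C_{\cdot 0}$ and contractivity checks are fine. But the central computation, as you set it up, fails because of non-commutativity. First, with $T|\irX_j=U_jW_j$ ($W_j$ positive), the operator $T^{*n}T^n$ on $\irX_j$ is not the ordered product $W_j^2W_{j+1}^2\cdots W_{j+n-1}^2$ (even after absorbing the $U_j$'s); it is the nested symmetric product $W_jW_{j+1}\cdots W_{j+n-1}^2\cdots W_{j+1}W_j$. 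Second, even your ordered product does not telescope: with $W_k^2=A_k^{1/2}A_{k+1}^{-1}A_k^{1/2}$ one gets
\[
W_j^2W_{j+1}^2=A_j^{1/2}A_{j+1}^{-1}A_j^{1/2}\,A_{j+1}^{1/2}A_{j+2}^{-1}A_{j+1}^{1/2},
\]
and the middle factors are $A_j^{1/2}A_{j+1}^{1/2}$, not $A_{j+1}^{1/2}A_{j+1}^{1/2}$, so the advertised cancellation $A_k^{1/2}\cdot A_k^{1/2}=A_k$ never occurs; the identity $W_j^2\cdots W_{j+n-1}^2=A_j^{1/2}A_{j+n}^{-1}A_j^{1/2}$ is simply false unless the blocks commute (as they do in the diagonal situation of Lemma \ref{diag_clust_to_1_lemma}, which is perhaps what suggested it). Nor does the correctly computed nested product with these $W_k$ simplify to $A_j^{1/2}A_{j+n}^{-1}A_j^{1/2}$, since $W_k=|A_{k+1}^{-1/2}A_k^{1/2}|$ and the unitary parts of the polar decompositions obstruct cancellation; no argument is given that it converges to $A_j$ at all.

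The repair is to drop the ansatz ``fixed unitary times positive weight'' and keep the shift inside the formula, which is what the paper does: with $S$ the block shift built from the $U_j$, set $T|\irX_j=A_{j+1}^{-1/2}SA_j^{1/2}$. Then powers genuinely telescope, $T^n|\irX_j=A_{j+n}^{-1/2}S^nA_j^{1/2}$, because adjacent factors $A_k^{1/2}A_k^{-1/2}$ cancel, so $T^{*n}T^n=\sum_{j=0}^\infty\oplus A_j^{1/2}S^{*n}A_{j+n}^{-1}S^nA_j^{1/2}$, and your intended estimate
\[
\|A_j-A_j^{1/2}S^{*n}A_{j+n}^{-1}S^nA_j^{1/2}\|\leq r\big(A_{j+n}^{-1}-I_{\irX_{j+n}}\big)\leq \frac{1}{\underline{r}(A_n)}-1
\]
is uniform in $j$ and gives the uniform convergence; contractivity follows from $\|A_{j+1}^{-1/2}SA_j^{1/2}x_j\|\leq\sqrt{r(A_j)/\underline{r}(A_{j+1})}\,\|x_j\|\leq\|x_j\|$, and the $C_{\cdot 0}$ property from the block-shift structure exactly as you argued. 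Note also that this definition needs no separate treatment of the block $j=0$ and no positivity of any weight.
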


\begin{proof} 
Let us consider the unilateral shift $S\in\irB(\irH)$, given by
\[ S (x_0\oplus x_1 \oplus x_2 \oplus\dots) = 0\oplus U_0 x_0\oplus U_1 x_1 \oplus \dots \quad (x_j\in\irX_j), \]
where $U_j\colon \irX_j\to\irX_{j+1}$ are unitary transformations ($j\in\Z_+$). Let $T$ be defined by
\[ T|\irX_j = A_{j+1}^{-1/2}SA_{j}^{1/2} \quad (j\in\Z_+).\]
Since 
\[ \|A_{j+1}^{-1/2}SA_{j}^{1/2}x_{j}\| \leq \sqrt{\frac{1}{\underline{r}(A_{j+1})}} \|A_{j}^{1/2}x_{j}\| \leq \sqrt{\frac{r(A_{j})}{\underline{r}(A_{j+1})}} \|x_{j}\| \leq \|x_{j}\|, \]
we obtain that $T$ is a contraction of class $C_{\cdot 0}$. An easy calculation shows that
\[ T^{*n}T^n = \sum_{j=0}^\infty\oplus A_j^{1/2}S^{*n}A_{j+n}^{-1}S^nA_j^{1/2}. \]
By the spectral mapping theorem, we have
\[ \|A_j-A_j^{1/2}S^{*n}A_{j+n}^{-1}S^nA_j^{1/2}\| \leq \|A_j^{1/2}\|\cdot\|I_{\irX_j}-S^{*n}A_{j+n}^{-1}S^n\|\cdot\|A_j^{1/2}\| \] \[ \leq r(A_{j+n}^{-1}-I_{\irX_{j+n}}) \leq \frac{1}{\underline{r}(A_{j+n})}-1 \leq \frac{1}{\underline{r}(A_{n})}-1. \]
This yields
\[ \|T^{*n}T^n-A\| = \sup\left\{\|A_j-A_j^{1/2}S^{*n}A_{j+n}^{-1}S^nA_j^{1/2}\|\colon j\in\Z_+\right\} \] 
\[ \leq \frac{1}{\underline{r}(A_{n})}-1 \longrightarrow 0 \quad (n\to\infty). \]
So $A$ arises asymptotically from $T\in C_{\cdot 0}(\irH)$ in uniform convergence.
\end{proof}

The following lemma deals with diagonal positive contractions.

\begin{lem}\label{diag_clust_to_1_lemma}
Let $A$ be a positive diagonal contraction on a separable infinite dimensional Hilbert space $\irH$. Suppose that the eigenvalues of $A$ can be arranged into an increasing sequence $\{\lambda_j\}_{j=1}^\infty$, each listed according to its multiplicity, so that $0<\lambda_j<1$ holds for every $j\in\N$ and $\lambda_j\nearrow 1$. Then $A$ is the uniform asymptotic limit of a $C_{\cdot 0}$-contraction.
\end{lem}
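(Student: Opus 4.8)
The plan is to present $A$ as a block-diagonal operator with one-dimensional summands to which Lemma~\ref{block_diag_same_dim_lemma} applies directly; all the analytic content will then already be available.

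First, by relabelling the orthonormal basis in which $A$ is diagonal (or, if one prefers a more formal justification, by invoking Remark~\ref{contr_rem}(e), which lets us replace $A$ by any unitarily equivalent operator, norm convergence and membership in $C_{\cdot 0}$ being preserved), I would arrange that there is an orthonormal basis $\{e_j\}_{j\in\Z_+}$ of $\irH$ with $Ae_j=\lambda_{j+1}e_j$ for every $j\in\Z_+$, where $0<\lambda_{j+1}\le\lambda_{j+2}$ for all $j$ and $\lambda_j\nearrow 1$. This is possible precisely because the hypothesis says the diagonal entries of $A$, counted with multiplicity, form a sequence that can be sorted into such an increasing sequence.

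Next, I would set $\irX_j=\C e_j$ and $A_j=\lambda_{j+1}I_{\irX_j}$ for $j\in\Z_+$, so that $A=\sum_{j=0}^\infty\oplus A_j$ acts on $\irH=\sum_{j=0}^\infty\oplus\irX_j$, and then check the four hypotheses of Lemma~\ref{block_diag_same_dim_lemma}: (i) is immediate since $\dim\irX_j=1$ for every $j$; (ii) amounts to $\lambda_{j+1}=r(A_j)\le\underline{r}(A_{j+1})=\lambda_{j+2}$, i.e.\ the monotonicity of $\{\lambda_j\}$; (iii) holds because $A_1=\lambda_2 I_{\irX_1}$ with $\lambda_2>0$; and (iv) is exactly the assumption $\lambda_{j+1}\nearrow 1$. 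Applying Lemma~\ref{block_diag_same_dim_lemma} then produces a contraction $T\in C_{\cdot 0}(\irH)$ with $\|T^{*n}T^n-A\|\to 0$, which is the desired conclusion; tracing through its construction, $T$ is simply the weighted unilateral shift $Te_j=\sqrt{\lambda_{j+1}/\lambda_{j+2}}\,e_{j+1}$.

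I do not expect a genuine obstacle here, since all the work sits in the preceding lemma; the only points requiring a little care are the index shift between the $j\ge 0$ labelling used in Lemma~\ref{block_diag_same_dim_lemma} and the $j\ge 1$ labelling of the eigenvalues, and the observation that hypothesis (iii) of that lemma demands invertibility only of $A_1$ (not of $A_0$), which is automatic here because every $\lambda_j$ is positive. One could equally well group the basis into blocks of any common finite dimension, but the one-dimensional grouping is the most economical.
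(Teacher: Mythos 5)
Your argument is correct: after sorting the basis so that $Ae_j=\lambda_{j+1}e_j$ with $\lambda_{j+1}\le\lambda_{j+2}$, the one-dimensional blocks $\irX_j=\C e_j$, $A_j=\lambda_{j+1}I_{\irX_j}$ do satisfy all four hypotheses of Lemma~\ref{block_diag_same_dim_lemma} (note that the hypothesis on multiplicities forces every eigenvalue to have finite multiplicity, so the sorted enumeration exists), and that lemma then hands you the weighted shift $Te_j=\sqrt{\lambda_{j+1}/\lambda_{j+2}}\,e_{j+1}$ with $\|T^{*n}T^n-A\|\le 1/\lambda_{n+1}-1\to 0$ and $T\in C_{\cdot 0}(\irH)$. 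This is, however, a genuinely different route from the paper's. The paper does not invoke Lemma~\ref{block_diag_same_dim_lemma} at all: it redistributes the eigenvalues along antidiagonals into a matrix $[\alpha_{l,m}]$, groups the eigenvectors into infinitely many \emph{infinite-dimensional} columns $\irX_m$, and verifies the contractivity and the uniform estimate entrywise via $\lambda_n\le\alpha_{l,m+n}$ (it could not apply Lemma~\ref{block_diag_same_dim_lemma} to these columns, since the interleaving destroys the spectral ordering $r(A_m)\le\underline{r}(A_{m+1})$). Your reduction is shorter and more economical for the lemma as stated; what the paper's heavier construction buys is reusability: in Case 2 of Theorem~\ref{mainthm_separable} the same scheme is modified so that an infinite-dimensional bottom block $\irX_0=\irH([0,b))$ is shifted into the first column, which requires the receiving columns to be infinite-dimensional and hence cannot be obtained from the one-dimensional-block version.
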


\begin{proof} First we form a matrix $[\alpha_{l,m}]_{l,m\in\N}$ from the eigenvalues in the following way: $\alpha_{1,1}=\lambda_1$; $\alpha_{2,1}=\lambda_2$ and $\alpha_{1,2}=\lambda_3$; $\alpha_{3,1}=\lambda_4$, $\alpha_{2,2}=\lambda_5$ and $\alpha_{1,3}=\lambda_6$; \dots and so on. Hence we have:

\[ \left[
\begin{matrix}
\alpha_{1,1} & \alpha_{1,2} & \alpha_{1,3} & \alpha_{1,4} & \dots\\
\alpha_{2,1} & \alpha_{2,2} & \alpha_{2,3} \\
\alpha_{3,1} & \alpha_{3,2} \\
\alpha_{4,1} & & &  \\
\vdots & & & & \ddots \\
\end{matrix}\right] = 
\left[\begin{matrix}
\lambda_1 & \lambda_3 & \lambda_6 & \lambda_{10} & \dots\\
\lambda_2 & \lambda_5 & \lambda_9 \\
\lambda_4 & \lambda_8 \\
\lambda_7 & & &  \\
\vdots & & & & \ddots \\
\end{matrix}\right] \]
We can choose an orthonormal basis $\{e_{l,m}\colon l,m\in\N\}$ in $\irH$ such that $e_{l,m}$ is an eigenvector corresponding to the eigenvalue $\alpha_{l,m}$ of $A$. Now we form the subspaces:
\[ \irX_m := \vee\{e_{l,m}\colon l\in\N\} \quad (m\in\N), \]
which are reducing for $A$. For any $m\in\N$, we set $A_m:=A|\irX_m$. Let us consider also the unilateral shift $S\in\irB(\irH)$, defined by $Se_{l,m}=e_{l,m+1}$ ($l,m\in\N$). Now the operator $T\in\irB(\irH)$ is given by the following equality:
\[ T|\irX_m = A_{m+1}^{-1/2}SA_m^{1/2} \quad (m\in\N). \]
Since 
\[ T e_{l,m} = \sqrt{\frac{\alpha_{l,m}}{\alpha_{l,m+1}}}e_{l,m+1} \quad \text{and} \quad \sqrt{\frac{\alpha_{l,m}}{\alpha_{l,m+1}}}\leq 1 \quad (l,m\in\N), \]
$T$ is a $C_{\cdot 0}$-contraction. Furthermore, for every $l,m,n\in\N$, we have $\lambda_n\leq \alpha_{l,m+n}$, and so
\[ \|T^{*n}T^{n} e_{l,m} - A e_{l,m} \| = \frac{\alpha_{l,m}}{\alpha_{l,m+n}}-\alpha_{l,m} \leq \frac{1}{\lambda_{n}}-1 \to 0 \quad (n\to\infty). \]
Since $e_{l,m}$ is an eigenvector for both $A$ and $T^{*n}T^{n}$, the sequence $T^{*n}T^{n}$ uniformly converges to $A$ on $\irH$. So $A$ arises asymptotically from a $C_{\cdot 0}$-contraction in uniform convergence.
\end{proof}

Now we are ready to prove our main theorem. This states that a positive contraction, which acts on a separable space, is an asymptotic limit of a contraction if and only if one of the conditions (i)--(iii) of Theorem \ref{nec_thm_contr} holds. The non-separable case is a little bit more complicated, and will be handled after the main theorem. In what follows, $E$ stands for the spectral measure of the positive operator $A$ and $\irH(\omega)=E(\omega)\irH$ for any Borel subset $\omega\subset\R$. Let us consider the orthogonal decomposition $\irH=\irH_{d}\oplus\irH_c$, reducing for $A$, where $A|\irH_{d}$ is diagonal and $A|\irH_{c}$ has no eigenvalue. Let us denote the spectral measure of $A|\irH_{d}$ and $A|\irH_{c}$ by $E_d$ and $E_c$, respectively. For any Borel set $\omega\subset\R$ we shall write $\irH_c(\omega)=E_c(\omega)\irH_c$ and $\irH_{d}(\omega)=E_{d}(\omega)\irH_d$.

\begin{thm}\label{mainthm_separable}
Let $\irH$ be a separable, infinite dimensional Hilbert space, and $A$ a positive contraction acting on $\irH$. The following four conditions are equivalent:
\begin{itemize}
\item[\textup{(i)}] $A$ arises asymptotically from a contraction,
\item[\textup{(ii)}] $A$ arises asymptotically from a contraction in uniform convergence,
\item[\textup{(iii)}] $r_e(A)=1$ or $A$ is a projection of finite rank, and
\item[\textup{(iv)}] $\dim\irH((0,1]) = \dim\irH((\delta,1])$ for every $0\leq\delta<1$.
\end{itemize}
Moreover if one of these conditions holds and $\dim\irN(A-I)=0$ or $\aleph_0$, then the inducing $T$ can be chosen to be a $C_{\cdot 0}$-contraction.
\end{thm}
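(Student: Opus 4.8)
The plan is to prove the cycle $(ii)\Rightarrow(i)\Rightarrow(iii)\Leftrightarrow(iv)\Rightarrow(ii)$ together with the $C_{\cdot 0}$-refinement. The implication $(ii)\Rightarrow(i)$ is trivial, and $(i)\Rightarrow(iii)$ is immediate from Theorem~\ref{nec_thm_contr} (its first two alternatives give ``$A$ is a finite rank projection'', the third gives $r_e(A)=1$). For $(iii)\Leftrightarrow(iv)$ I would argue purely spectrally. First, $r_e(A)=1$ holds iff $\dim\irH((\delta,1])=\infty$ for every $\delta<1$: if some $E((\delta,1])$ had finite rank, then $A=AE((\delta,1])+AE([0,\delta])$ would be a finite rank perturbation of an operator of norm $\le\delta$, whence $r_e(A)\le\delta<1$; conversely, infinite rank of all these projections forces $1\in\sigma_e(A)$. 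Second, if $\dim\irH((0,1])<\infty$ then $(iv)$ forces $\irH((0,1])=\bigcap_{\delta<1}\irH((\delta,1])=\irN(A-I)$ (a decreasing chain of equidimensional finite-dimensional spaces is constant), hence $\sigma(A)\subseteq\{0,1\}$ and $A$ is a finite rank projection. Since $\irH$ is separable, these two facts yield $(iii)\Leftrightarrow(iv)$.

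The substance is $(iii)\Rightarrow(ii)$. If $A=0$ take $T=\tfrac12 I_\irH\in C_{00}\subseteq C_{\cdot 0}$; if $A$ is a finite rank projection of rank $n\ge1$ take $T=A$ (then $A^{*m}A^m=A$, and $\dim\irN(A-I)=n\notin\{0,\aleph_0\}$, so no $C_{\cdot 0}$-claim is asked). So assume $r_e(A)=1$, i.e.\ $\dim\irH((\delta,1])=\aleph_0$ for every $\delta<1$. If $\dim\irN(A-I)=\aleph_0$ I would apply Lemma~\ref{block_diag_same_dim_lemma} directly: split $\irN(A-I)$ into $\aleph_0$-dimensional reducing subspaces $\irX_1,\irX_2,\dots$ and put $\irX_0=(\irH\ominus\irN(A-I))\oplus(\text{one further }\aleph_0\text{-dimensional piece of }\irN(A-I))$; then $A_m=I$ for $m\ge1$, $r(A_0)=1$, all hypotheses hold, and the lemma yields a $C_{\cdot 0}$-contraction with asymptotic limit $A$. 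If $0<\dim\irN(A-I)=n<\aleph_0$, write $A=A'\oplus I_n$ with $A'=A|\irN(A-I)^\perp$; since $r_e(A')=r_e(A)=1$ and $\dim\irN(A'-I)=0$, the construction below gives a $C_{\cdot 0}$-contraction $T'$ with $A_{T'}=A'$, and $T=T'\oplus I_n$ suffices (only ``$T$ a contraction'' is claimed here, consistently with $n\notin\{0,\aleph_0\}$).

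It remains to treat the core case: $1\notin\sigma_p(A)$, $r_e(A)=1$, $\irH$ separable and infinite-dimensional, producing a $C_{\cdot 0}$-contraction with asymptotic limit $A$. Peeling off $\irN(A)$ (handled by $\tfrac12 I$ on it) we may assume $A$ injective. I would then look for $T$ of the form $T=A^{-1/2}VA^{1/2}$ for a suitable isometry $V$ on $\irH$ --- the shape of the maps in the two lemmas. The requirements on $V$ are: (a) $V^*A^{-1}V\le A^{-1}$ as forms on $\mathrm{dom}(A^{-1/2})$, which makes $T$ a contraction since $\|Th\|^2=\langle A^{-1}VA^{1/2}h,VA^{1/2}h\rangle\le\langle A^{-1}A^{1/2}h,A^{1/2}h\rangle=\|h\|^2$ there and $T$ then extends to $\irH$; (b) $V$ a pure isometry, so $V^{*n}\to0$ strongly and $T\in C_{\cdot 0}$; (c) $\langle(A^{-1}-I)V^nA^{1/2}h,V^nA^{1/2}h\rangle\to0$ on a dense set, i.e.\ the spectral mass of $V^ng$ concentrates at $1$, which together with $T^n=A^{-1/2}V^nA^{1/2}$ forces $T^{*n}T^n\to A$. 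Using the spectral measure of $A$ I would decompose $\irH$ into the diagonal part, the purely continuous part, and dyadic spectral bands near $1$, and build $V$ band by band, always moving spectral mass upward while keeping the $A^{-1}$-form under control --- for instance sending a band of the continuous part isometrically into a band on which $A$ is bounded below by the norm of the continuous part (so that (a) is automatic there from $\min\sigma$ of the source), invoking Lemma~\ref{block_diag_same_dim_lemma} verbatim wherever equidimensional reducing bands climbing to $1$ are available and Lemma~\ref{diag_clust_to_1_lemma} wherever a diagonal invertible piece with eigenvalues increasing to $1$ is isolated, and finally assembling $T$ as a direct sum over the pieces via Remark~\ref{contr_rem}(a).

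The main obstacle is exactly the construction of $V$ (equivalently, the decomposition of $A$ into ``lemma-able'' direct summands): a continuous spectral part, or any reducing part whose spectrum is bounded away from $1$, is never an asymptotic limit in isolation, so it must be interleaved with the $\aleph_0$-dimensional spectral mass near $1$ without violating the contractivity estimate $V^*A^{-1}V\le A^{-1}$ and without spoiling the concentration $V^ng\to\{1\}$. Organising which subband of $A$ is sent where --- uniformly enough that $T^{*n}T^n\to A$ in norm and $T$ remains a contraction --- is where essentially all of the work lies, and it is here that the hypothesis $\dim\irH((\delta,1])=\dim\irH((0,1])$ is used at full strength.
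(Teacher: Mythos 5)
Your handling of the peripheral parts is sound: (ii)$\Rightarrow$(i), (i)$\Rightarrow$(iii) via Theorem~\ref{nec_thm_contr}, the spectral argument for (iii)$\Leftrightarrow$(iv), the trivial cases $A=0$ and $A$ a finite rank projection, and the cases where $1$ is an eigenvalue. In particular your direct use of Lemma~\ref{block_diag_same_dim_lemma} when $\dim\irN(A-I)=\aleph_0$ (with $\irX_0=\irN(A-I)^\perp$ enlarged by an $\aleph_0$-dimensional piece of $\irN(A-I)$ and $A_m=I$ for $m\geq 1$) is correct and coincides in substance with the paper's Case 3, and the reduction of $0<\dim\irN(A-I)<\aleph_0$ to the core case by splitting off $I_n$ is also what the paper does.

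The problem is that the core case --- $r_e(A)=1$, $1\notin\sigma_p(A)$, $A$ injective --- is exactly where your argument stops. You propose $T=A^{-1/2}VA^{1/2}$ for an isometry $V$ to be built ``band by band'' and state yourself that organising which spectral piece is sent where ``is where essentially all of the work lies''; but that organisation \emph{is} the content of the implication (iii)$\Rightarrow$(ii), so the proof is not complete. The missing idea in the paper is a dichotomy on the spectral bands near $1$: either one can choose $0=a_0<a_1<a_2<\cdots\nearrow 1$ with $\dim\irH([a_n,a_{n+1}))=\aleph_0$ for every $n$, and then Lemma~\ref{block_diag_same_dim_lemma} applies verbatim with $\irX_j=\irH([a_j,a_{j+1}))$; or no such sequence exists, in which case either $\dim\irH([0,\beta))<\aleph_0$ for all $\beta<1$ (so $A$ is diagonal with finite multiplicities and Lemma~\ref{diag_clust_to_1_lemma} applies), or there is a threshold $b<1$ with $\dim\irH([0,b))=\aleph_0$ and $\dim\irH([b,\beta))<\aleph_0$ for all $\beta\in(b,1)$. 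In that last situation $A|\irH([b,1))$ is automatically diagonal with finite multiplicities $\lambda_j\nearrow 1$ (so any continuous spectral part sits inside $\irH([0,b))$), and the paper reruns the scheme of Lemma~\ref{diag_clust_to_1_lemma} with the entire, possibly non-diagonal, block $\irX_0:=\irH([0,b))$ inserted as the zeroth column of the shift: contractivity of $T|\irX_0=A_1^{-1/2}SA_0^{1/2}$ follows from $r(A_0)\leq b\leq\underline{r}(A_1)$, and $\|T^{*n}T^nx_0-Ax_0\|\leq\|A_n^{-1}-I_{\irX_n}\|\leq \lambda_n^{-1}-1\to 0$ gives uniform convergence. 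This explicit interleaving of the infinite-dimensional low band with the eigenvector columns climbing to $1$ is precisely what your sketch gestures at (conditions (a)--(c) on $V$) but never constructs, so as written the proposal does not establish (iii)$\Rightarrow$(ii) and hence not the theorem.
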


\begin{proof} The implication (i)$\Longrightarrow$(iii) follows from Theorem \ref{nec_thm_contr}, and (ii)$\Longrightarrow$(i) is trivial. First we prove the implication (iii)$\Longrightarrow$ (ii), in order to complete the implication circle (i)$\Longrightarrow$(iii)$\Longrightarrow$(ii)$\Longrightarrow$(i), and in the end of the proof we show the equivalence (iii)$\iff$(iv). We suppose that $r_e(A)=1$. (If $A$ is a finite rank projection, then $T=A$ can be chosen.) If $\irN(A)\neq\{0\}$, then $A$ has the form $A=0\oplus A_1$ in the decomposition $\irH=\irN(A)\oplus\irN(A)^\perp$, where $r_e(A_1)=1$. If $A_1$ arises asymptotically from the contraction $T_1$ in uniform convergence, then $A$ arises asymptotically from $0\oplus T_1$ in uniform convergence. Hence we may assume that $\irN(A)=\{0\}$. Obviously, one of the next three cases occurs.

\bigskip

\textit{Case 1: there exists a strictly increasing sequence $0=a_0<a_1<a_2<\dots$ such that $a_n\nearrow 1$ and $\dim\irH([a_n,a_{n+1}))=\aleph_0$ for every $n\in\Z_+$.}
If 1 is not an eigenvalue of $A$, then Lemma \ref{block_diag_same_dim_lemma} can be applied. So we may suppose that $\dim\irN(A-I)\geq 1$. In this case we have the orthogonal decomposition: $A = A_0\oplus A_1$, where $A_0 = A|\irN(A-I)^\perp$ and $A_1 = A|\irN(A-I)$. Again using Lemma \ref{block_diag_same_dim_lemma} we obtain a contraction $T_0\in\irB(\irN(A-I)^\perp)$ such that the uniform asymptotic limit of $T_0$ is $A_0$. Choosing any isometry $T_1\in\irB(\irN(A-I))$, $A$ arises asymptotically from $T:=T_0\oplus T_1$ in uniform convergence.

\bigskip

\textit{Case 2: $\irN(A-I) = \{0\}$ and there is no strictly increasing sequence $0=a_0<a_1<a_2<\dots$ such that $a_n\nearrow 1$ and $\dim\irH([a_n,a_{n+1}))=\aleph_0$ for every $n\in\Z_+$.} If $\dim\irH([0,\beta))<\aleph_0$ for each $0<\beta<1$, then $A$ is diagonal, all eigenvalues are in (0,1) and have finite multiplicities. Therefore Lemma \ref{diag_clust_to_1_lemma} can be applied. If this is not the case, then there is a $0<b<1$ which satisfies the following conditions: $\dim\irH([0,b))=\aleph_0$ and $\dim\irH([b,\beta))<\aleph_0$ for all $b<\beta<1$. We take the decomposition $\irH = \irH([0,b))\oplus\irH([b,1))$, where $\dim\irH([b,1))=\aleph_0$ obviously holds, since $1\in\sigma_e(A)$. In order to handle this case, we have to modify the argument applied in Lemma \ref{diag_clust_to_1_lemma}.

Let us arrange the eigenvalues of $A$ in $[b,1)$ in an increasing sequence $\{\lambda_j\}_{j=1}^\infty$, each listed according to its multiplicity. We form the same matrix $[\alpha_{l,m}]_{l,m\in\N}$ as in Lemma \ref{diag_clust_to_1_lemma}, and take an orthonormal basis $\{e_{l,m}\colon l,m\in\N\}$ in $\irH([b,1))$ such that each $e_{l,m}$ is an eigenvector corresponding to the eigenvalue $\alpha_{l,m}$ of $A$. Let $\irX_0:=\irH([0,b))$ and $\irX_m:=\vee\{e_{l,m}\colon l\in\N\}$ ($m\in\N$). Take an arbitrary orthonormal basis $\{e_{l,0}\}_{l=1}^\infty$ in the subspace $\irX_0$. We define the operator $T$ by the following equation
\[ T|\irX_m = A_{m+1}^{-1/2}SA_{m}^{1/2}|\irX_m \quad (m\in\Z_+),\]
where $A_m:=A|\irX_m$ and $S\in\irB(\irH)$, $Se_{l,m} = e_{l,m+1}$ ($l\in\N, m\in\Z_+$).

For a vector $x_0\in\irX_0$ we have
\[ \|Tx_0\| = \|A_1^{-1/2}SA_0^{1/2}x_0\| \leq \sqrt{\frac{1}{b}}\|A_0^{1/2}x_0\|\leq \|x_0\|, \]
so $T$ is a contraction on $\irX_0$. But it is also a contraction on $\irX_0^\perp$ (see the proof of Lemma \ref{diag_clust_to_1_lemma}), and since 
$T\irX_0\perp T(\irX_0^\perp)$, it is a contraction on the whole $\irH$.

We have to show yet that $T^{*n}T^n$ converges uniformly to $A$ on $\irX_0$. For $x_0\in\irX_0, \|x_0\|=1$ we get
\[ \|T^{*n}T^{n} x_0 - A x_0\| = \|A_0^{1/2}S^{*n}(A_n^{-1}-I_{\irX_n})S^{n}A_0^{1/2} x_0\| \]
\[ \leq \|A_n^{-1} - I_{\irX_n}\| < \frac{1}{\lambda_{n}}-1\to 0. \]
So $A$ arises asymptotically from $T$ in uniform convergence.

\bigskip

\textit{Case 3: $\dim{\irN(A-I)}>0$.} If $\dim{\irN(A-I)}<\aleph_0$, then we take the orthogonal decomposition $\irH = \irN(A-I)^\perp\oplus\irN(A-I)$. Trivially $1\in\sigma_e(A|\irN(A-I)^\perp)$. By Cases 1 and 2, we can find a contraction $T_0\in\irB(\irN(A-I)^\perp)$ such that the uniform asymptotic limit of $T=T_0\oplus I_{\irN(A-I)}$ is $A$.

If $\dim{\irN(A-I)}=\aleph_0$ and $A\neq I$, then we take an orthogonal decomposition $\irN(A-I) = \sum_{j=1}^\infty\oplus \irX_j$, where $\dim\irX_j=\dim\irN(A-I)^\perp$, and apply Lemma \ref{block_diag_same_dim_lemma}. If $A=I$ then just take an isometry for $T$.

\bigskip

Now we turn to the equivalence (iii)$\iff$(iv). If $A$ is a projection, then $\dim((\delta,1])$ is the rank of $A$ for every $0\leq\delta< 1$. If $1\in\sigma_e(A)$, then $\dim\irH((\delta,1])=\aleph_0$ holds for all $0\leq\delta< 1$. Conversely if $\dim\irH((\delta,1])=\dim\irH((0,1])$ is finite ($0\leq\delta< 1$), then obviously $A$ is a projection of finite rank. If this dimension is $\aleph_0$, then clearly $1\in\sigma_e(A)$.

Finally, from the previous discussions we can see that if the equivalent conditions (i)-(iv) hold, then the contraction $T$, inducing $A$, can be chosen from the class $C_{\cdot 0}$ provided $\dim\irN(A-I)\notin\N$.
\end{proof}

Now we turn to the case when $\dim\irH>\aleph_0$. If $T$ is a contraction on $\irH$, then $\irH$ can be decomposed into the orthogonal sum of separable reducing subspaces $\irH = \sum_{\xi\in \Xi}\oplus\irH_\xi$ and so $T=\sum_{\xi\in \Xi}\oplus T_\xi$, where $T_\xi=T|\irH_\xi$. Hence $A_T$ is the orthogonal sum of asymptotic limits of contractions, all acting on a separable space: $A_T = \sum_{\xi\in \Xi}\oplus A_{T_\xi}$.

If $\kappa$ is an infinite cardinal number, satisfying $\kappa\leq\dim\irH$, then the closure of the set $\irE_\kappa := \{S\in\irB(\irH)\colon \dim (\irR(S))^-<\kappa\}$, is a proper two-sided ideal, denoted by $\irC_\kappa$. Let $\irF_\kappa := \irB(\irH)/\irC_\kappa$ be the quotient algebra, $\pi_\kappa\colon \irB(\irH)\to\irF_\kappa$ the quotient map and $\|.\|_\kappa$ the quotient norm on $\irF_\kappa$. For an operator $A\in\irB(\irH)$ we use the notation $\|A\|_\kappa := \|\pi_\kappa(A)\|_\kappa$, $\sigma_\kappa(A) := \sigma(\pi_\kappa(A))$ and $r_\kappa(A) := r(\pi_\kappa(A))$. (For $\kappa=\aleph_0$ we get the ideal of compact operators, $\|A\|_{\aleph_0} = \|A\|_e$ is the essential norm, $\sigma_{\aleph_0}(A)=\sigma_e(A)$ and $r_{\aleph_0}(A)=r_e(A)$.) For more details see \cite{terElst} or \cite{Luft}.

\begin{thm}\label{main_thm_arbitrary}
Let $\irH$ be a non-separable Hilbert space and let $A$ be a positive contraction acting on it. Then the following four conditions are equivalent:
\begin{itemize}
\item[\textup{(i)}] $A$ arises asymptotically from a contraction,
\item[\textup{(ii)}] $A$ arises asymptotically from a contraction in uniform convergence,
\item[\textup{(iii)}] $A$ is a finite rank projection, or $\kappa=\dim\irH((0,1])\geq\aleph_0$ and $r_\kappa(A)=1$ holds,
\item[\textup{(iv)}] $\dim{\irH\left((0,1]\right)}=\dim{\irH\left((\delta,1]\right)}$ for any $0\leq\delta<1$.
\end{itemize}
Moreover, when $\dim\irN(A-I)=0$ is zero or infinite and (iv) holds, then we can choose a $C_{\cdot 0}$ contraction $T$ such that $A$ is the uniform asymptotic limit of $T$.
\end{thm}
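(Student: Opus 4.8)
The plan is to reduce the non-separable case to the separable one established in Theorem \ref{mainthm_separable}, with the cardinal $\aleph_0$ systematically replaced by the cardinal $\kappa := \dim\irH((0,1])$ (when this is infinite). First I would dispose of the easy implications exactly as before: (ii)$\Longrightarrow$(i) is trivial, and (i)$\Longrightarrow$(iii) follows by decomposing $\irH = \sum_{\xi\in\Xi}\oplus\irH_\xi$ into separable reducing subspaces of $T$ (as explained in the paragraph preceding the theorem), so that $A_T = \sum_\xi\oplus A_{T_\xi}$; from Theorem \ref{nec_thm_contr} applied to each summand one reads off that either $A_T$ is a finite rank projection or $1\in\sigma_e(A_{T_\xi})$ for some $\xi$, and then a counting argument on the spectral subspaces $\irH((\delta,1])$ gives $r_\kappa(A)=1$ with $\kappa=\dim\irH((0,1])\geq\aleph_0$. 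The equivalence (iii)$\iff$(iv) is a routine spectral-measure bookkeeping: $r_\kappa(A)=1$ says precisely that $\dim\irH((\delta,1])\geq\kappa$ for all $\delta<1$, while the reverse inequality $\dim\irH((\delta,1])\leq\dim\irH((0,1])=\kappa$ is automatic, so (iii) and (iv) both amount to $\dim\irH((\delta,1])=\kappa$ for every $\delta\in[0,1)$.

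The heart of the matter is (iii)$\Longrightarrow$(ii). As in the separable proof, split off $\irN(A)$ (where one puts the zero operator) and reduce to $\irN(A)=\{0\}$; then one distinguishes the three cases according to whether there is a strictly increasing sequence $a_n\nearrow 1$ with $\dim\irH([a_n,a_{n+1}))=\kappa$ for all $n$. In Case 1 (such a sequence exists, and $1$ is not an eigenvalue, or we split off $\irN(A-I)$ first), one applies Lemma \ref{block_diag_same_dim_lemma} with the blocks $A_j := A|\irH([a_j,a_{j+1}))$ — note the lemma as stated does not require separability, only that all blocks have equal nonzero dimension, that $r(A_j)\leq\underline r(A_{j+1})$, that $A_1$ is invertible, and $r(A_j)\nearrow 1$, all of which hold here. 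The complementary Case 2 (no such sequence; either $A$ is diagonal with small spectral subspaces, or there is a critical $b$ with $\dim\irH([0,b))=\kappa$ and $\dim\irH([b,\beta))<\kappa$) and Case 3 ($\dim\irN(A-I)>0$, treated by splitting when the multiplicity is finite, and by a block decomposition plus Lemma \ref{block_diag_same_dim_lemma} when it is infinite) go through verbatim once $\aleph_0$ is read as $\kappa$; the only genuinely new point is that in Case 2 the diagonal Lemma \ref{diag_clust_to_1_lemma} cannot be invoked directly, since the non-eigenvalue part may carry continuous spectrum, so one must instead argue that if no $\kappa$-sequence exists then in fact $A|\irH([0,b))$ can still be absorbed as a single ``$\irX_0$''-block in front of a shift built from the eigenvalue part — mimicking the modified argument in Case 2 of Theorem \ref{mainthm_separable}, where $\irX_0=\irH([0,b))$ is left untouched by the shift $S$.

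The main obstacle I anticipate is precisely this Case 2 bookkeeping in the non-separable setting: one has to be sure that the eigenvalue part of $A$ above the threshold $b$ really is infinite-dimensional and of the right cardinality to run the matrix-filling construction from Lemma \ref{diag_clust_to_1_lemma}, and that the leftover piece $\irH([0,b))$ — which may be continuous-spectrum — has dimension $\le$ that of each shift-stage $\irX_m$, so that the contraction $T|\irX_0 = A_1^{-1/2} S A_0^{1/2}|\irX_0$ is well-defined with $S$ mapping $\irX_0$ isometrically into $\irX_1$. Here one uses that $1\in\sigma_\kappa(A)$ forces $\dim\irH([b,1))=\kappa$, hence after the matrix rearrangement every $\irX_m$ ($m\ge 1$) has dimension $\kappa\geq\dim\irX_0$, and one may harmlessly enlarge $\irX_0$-images inside $\irX_1$. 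Finally, tracking through all three cases one sees that the inducing $T$ lands in $C_{\cdot 0}(\irH)$ whenever $\dim\irN(A-I)$ is $0$ or infinite — in Case 1 and Case 2 the shift $S$ is a unilateral shift of class $C_{\cdot 0}$ and $T$ inherits this, while in Case 3 the isometric summand $I_{\irN(A-I)}$ is what spoils $C_{\cdot 0}$ in the finite-multiplicity situation but is itself absorbed into a $C_{\cdot 0}$ block when the multiplicity is infinite — which gives the last assertion.
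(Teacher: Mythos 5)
Your easy implications, the (iii)$\iff$(iv) bookkeeping (which is exactly Lemma 5 of ter Elst, cited by the paper), your Case 1, and the observation that Lemma \ref{block_diag_same_dim_lemma} never uses separability are all fine and agree with the paper. The genuine gap is in your Case 2, i.e.\ precisely in the configuration that makes the non-separable theorem nontrivial: $\kappa=\dim\irH((0,1])>\aleph_0$ of countable cofinality, where no spectral band $\irH([a_n,a_{n+1}))$ has dimension $\kappa$ but the band dimensions increase to $\kappa$. There your two devices both fail. First, $\dim\irH([0,\beta))<\kappa$ no longer forces diagonality (the part above the threshold $b$ may have purely continuous spectrum and multiplicity $\kappa$), and even in the diagonal case the spectral values cannot be listed in an increasing sequence indexed by $\N$, so the matrix-filling rearrangement of Lemma \ref{diag_clust_to_1_lemma} is simply unavailable; your assertion that ``after the matrix rearrangement every $\irX_m$ ($m\geq 1$) has dimension $\kappa$'' has no construction behind it. Second, no single chain of blocks can work here: if the columns are to satisfy the hypotheses of Lemma \ref{block_diag_same_dim_lemma}, then $r(A_1)<1$ (otherwise $A$ would have eigenvalue $1$, already split off), hence $\irX_1\subset\irH([b,r(A_1)])$, whose dimension is $<\kappa$ by the very hypothesis of your Case 2 --- contradicting $\dim\irX_1\geq\dim\irX_0=\kappa$, which you need for $S$ to map $\irX_0$ isometrically into $\irX_1$. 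So the step ``Case 2 goes through once $\aleph_0$ is read as $\kappa$'' would fail.

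What is missing is the paper's actual device for this situation (its Cases 3 and 4): slice each band $\irX_j=\irH((a_j,a_{j+1}])$ into sub-blocks $\irX_{j,k}$ of the smaller cardinals $\alpha_k$, and apply Lemma \ref{block_diag_same_dim_lemma} not once but simultaneously to countably many constant-dimension chains $\irX_{k,k},\irX_{k+1,k},\dots$, obtaining contractions $T_k$; the point that makes $T=\bigl(\sum_k\oplus T_k\bigr)\oplus V$ work is that the convergence estimate from the proof of Lemma \ref{block_diag_same_dim_lemma} is $\frac{1}{a_{n+k}}-1\leq\frac{1}{a_n}-1$, i.e.\ \emph{uniform in $k$}, so the direct sum still converges in norm. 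Some such uniform-rate, many-chains argument is indispensable and is absent from your outline. Two smaller points: the case $\dim\irH(\{1\})=\kappa$ needs its own treatment (the paper's Case 2: split $\irH(\{1\})$ into countably many pieces of dimension $\dim\irH([0,1))$ plus one leftover piece of dimension $\kappa$ carrying an isometry), and your ``verbatim'' Case 3 has a cardinality mismatch when $\dim\irN(A-I)$ and $\dim\irN(A-I)^\perp$ are infinite but different, which is repaired by the same leftover-isometry trick; these are fixable, but the countable-cofinality construction above is a missing idea, not a routine transcription of the separable proof.
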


\begin{proof} We may suppose that $A$ is not a projection of finite rank. Since $T=\sum_{\xi\in \Xi}\oplus T_\xi$, where every $T_\xi$ acts on a separable space, one can obtain by Theorem \ref{mainthm_separable} that (i) implies (iv). The implication (ii)$\Longrightarrow$(i) is obvious. 

For the implication (iv)$\Longrightarrow$(ii) (which completes the chain (i)$\Longrightarrow$(iv)$\Longrightarrow$(ii)$\Longrightarrow$(i)), set $\alpha=\dim\irH((0,1])$, which is necessarily infinite. If $\alpha=\aleph_0$, then applying Theorem \ref{mainthm_separable} we can get $A$ as the uniform asymptotic limit of a contraction (on the nullspace of $A$ we take the zero operator). So we may suppose that $\alpha>\aleph_0$. We may assume also that $A$ is injective. Now we take an arbitrary strictly increasing sequence $0 = a_0 < a_1 < a_2 < \dots$ such that $\lim_{j\to\infty}a_j=1$, and let $\alpha_j = \dim\irH((a_j,a_{j+1}])$ for every $j\in\Z_+$. Obviously $\beta := \sum_{j=0}^\infty \alpha_j =\dim\irH((0,1))\leq\dim\irH((0,1])=\alpha$. Clearly one of the following four cases occurs.

\bigskip

\textit{Case 1: $\alpha_j=\alpha$ for infinitely many indices $j$.} Then without loss of generality, we may suppose that this holds for every index $j$. By Lemma \ref{block_diag_same_dim_lemma} we can choose a contraction $T_0\in \irB(\irH((0,1)))$ such that $\|T_0^{*n}T_0^{n}-A|\irH((0,1))\| \to 0$. Set the operator
\[T := T_0\oplus V\in \irB\Big(\irH\big((0,1)\big)\oplus\irH\big(\{1\}\big)\Big),\] 
where $V\in\irB(\irH\big(\{1\}\big))$ is an arbitrary isometry. Trivially $T$ is a contraction with the uniform asymptotic limit $A$.

\bigskip

\textit{Case 2: $\dim\irH(\{1\}) = \alpha$.} Let us decompose $\irH(\{1\})$ into the orthogonal sum $\irH(\{1\}) = \big(\sum_{k=1}^\infty\oplus \irX_k\big) \oplus \irX$, where $\dim\irX_k=\beta$ for every $k\in\N$ and $\dim \irX = \alpha$. Setting $\irX_0 := \irH([0,1))$, we may apply Lemma \ref{block_diag_same_dim_lemma} for the restriction of $A$ to $\sum_{k=0}^\infty\oplus \irX_k$. Taking any isometry on $\irX$, we obtain that (ii) holds.

\bigskip

\textit{Case 3: $\dim\irH(\{1\})<\alpha$ and $\alpha_j<\alpha$ for every $j$.} Then clearly $\dim\irH((\delta,1)) = \dim\irH((0,1)) = \alpha$ for any $\delta\in[0,1)$. Joining subintervals together, we may assume that $\aleph_0\leq \alpha_j<\alpha_{j+1}$ holds for every $j\in\Z_+$ and $\sup_{j\geq 0}\alpha_j = \alpha$. Let $\irX_j := \irH((a_{j},a_{j+1}])$ for every $j\in\Z_+$. Obviously we can decompose every subspace $\irX_j$ into an orthogonal sum $\irX_j = \sum_{k=0}^j \oplus \irX_{j,k}$ such that $\dim\irX_{j,k} = \alpha_{k}$ for every $0 \leq k \leq j$. Then by Lemma \ref{block_diag_same_dim_lemma} we obtain a contraction $T_k \in \irB(\sum_{j=k}^\infty \oplus \irX_{j,k})$ such that the asymptotic limit of $T_k$ is $A\big|\sum_{j=k}^\infty \oplus \irX_{j,k}$ in uniform convergence. In fact, from the proof of  Lemma \ref{block_diag_same_dim_lemma}, one can see that 
\[ \Bigg\|T_k^{*n}T_k^n - A\bigg|\sum_{j=k}^\infty\oplus \irX_{j,k}\Bigg\| \leq \frac{1}{a_{n+k}}-1 \leq \frac{1}{a_{n}}-1\to 0. \] 
Therefore, if we choose an isometry $V\in\irB(\irH(\{1\}))$, we get that (ii) is satisfied with the contraction $T := \big(\sum_{k=0}^\infty\oplus T_k\big)\oplus V \in\irB(\irH)$.

\bigskip

\textit{Case 4: $\dim\irH(\{1\})<\alpha$ and $\alpha_j = \alpha$ holds for finitely many $j$ (but at least for one).} We may assume $\alpha_0 = \alpha$, $\aleph_0\leq\alpha_j<\alpha_{j+1}$ for every $j\in\N$ and $\sup_{j\geq 1}\alpha_j = \alpha$. Take an orthogonal decomposition $\irH((0,a_1)) = \sum_{k=1}^\infty \oplus \irL_k$, where $\dim\irL_k = \alpha_k$. Set also $\irX_j := \irH((a_{j},a_{j+1}])$ for every $j\in\N$ and take a decomposition $\irX_j = \sum_{k=1}^j \oplus \irX_{j,k}$ such that $\dim\irX_{j,k} = \alpha_{k}$ for every $1 \leq k \leq j$. Thus by Lemma \ref{block_diag_same_dim_lemma} we obtain a contraction $T_k \in \irB(\irL_k\oplus\sum_{j=k}^\infty \oplus \irX_{j,k})$ such that the asymptotic limit of $T_k$ is the restriction of $A $ to the subspace $\irL_k\oplus\sum_{j=k}^\infty \oplus \irX_{j,k}$ in uniform convergence. As in Case 3, we get (ii).

\bigskip

Now we turn to the implication (iii)$\Longrightarrow$(iv). Since $1 = r_\kappa(A) \leq \|A\|_\kappa \leq \|A\|\leq 1$, we have $\|A\|_\kappa = 1$. Applying Lemma 5 in \cite{terElst} we get $\dim\irH((\delta,1]) = \kappa$ for all $0\leq\delta<1$. For the implication (iv)$\Longrightarrow$(iii) we may assume that $\dim\irH((0,1])\geq\aleph_0$. Again applying Lemma 5 in \cite{terElst}, we get $\|A^n\|_\kappa=1$ for all $n\in\N$. This means that $r_\kappa(A)=1$.

Finally we notice that if $\dim\irN(A-I)\notin\N$, then we can choose a $C_{\cdot 0}$-contraction.
\end{proof}

We conclude this Section with a Corollary. The proof is immediate from condition (iv) of the last theorem, so we omit it.

\begin{cor}
Suppose that the function $g\colon[0,1]\to[0,1]$ is continuous, increasing, $g(0)=0$, $g(1)=1$ and $0<g(t)<1$ for $0<t<1$. If $A$ arises asymptotically from a contraction, then  so does $g(A)$.
\end{cor}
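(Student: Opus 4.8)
The plan is to verify that $g(A)$ satisfies condition (iv) of Theorem~\ref{main_thm_arbitrary} (equivalently of Theorem~\ref{mainthm_separable} in the separable case), and then invoke that theorem. The hypotheses on $g$ are exactly what is needed to make the spectral subspaces transform predictably: since $g$ is continuous, increasing (hence strictly monotone as a bijection $[0,1]\to[0,1]$), with $g(0)=0$, $g(1)=1$, and $0<g(t)<1$ for $t\in(0,1)$, the map $g$ is a homeomorphism of $[0,1]$ onto itself that fixes $0$ and $1$ and maps $(0,1)$ onto $(0,1)$.

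First I would record the behaviour of the spectral measure under $g$. If $E$ denotes the spectral measure of $A$, then by the spectral mapping / functional calculus the spectral measure $E_g$ of $g(A)$ satisfies $E_g(\omega) = E(g^{-1}(\omega))$ for every Borel set $\omega\subset\R$; consequently the spectral subspace of $g(A)$ attached to $\omega$ coincides with the spectral subspace of $A$ attached to $g^{-1}(\omega)$. Applying this with $\omega=(0,1]$ gives $g^{-1}((0,1]) = (0,1]$ (because $g$ fixes $1$ and sends $(0,1)$ onto $(0,1)$), so the subspace $\irH((0,1])$ is the same for $g(A)$ as for $A$. Applying it with $\omega=(\delta,1]$ for $0\le\delta<1$ gives $g^{-1}((\delta,1]) = (g^{-1}(\delta),1]$ with $g^{-1}(\delta)\in[0,1)$; hence the spectral subspace of $g(A)$ over $(\delta,1]$ equals $\irH((g^{-1}(\delta),1])$, the spectral subspace of $A$ over $(g^{-1}(\delta),1]$.

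Now the hypothesis that $A$ arises asymptotically from a contraction means, by Theorem~\ref{mainthm_separable}(iv) or Theorem~\ref{main_thm_arbitrary}(iv), that $\dim\irH((0,1]) = \dim\irH((\delta',1])$ for every $0\le\delta'<1$. Combining this with the identifications above, for any given $0\le\delta<1$ we have
\[ \dim\big(\text{spectral subspace of }g(A)\text{ over }(\delta,1]\big) = \dim\irH((g^{-1}(\delta),1]) = \dim\irH((0,1]), \]
and the right-hand side equals the dimension of the spectral subspace of $g(A)$ over $(0,1]$. Thus $g(A)$ satisfies condition (iv) of the appropriate main theorem, and therefore $g(A)$ arises asymptotically from a contraction (indeed in uniform convergence). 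One should also note that $g(A)$ is again a positive contraction, which is clear since $g$ maps $[0,1]$ into $[0,1]$.

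The only point that requires the slightest care is the identity $E_g(\omega)=E(g^{-1}(\omega))$ and the attendant fact that $g^{-1}$ is a genuine Borel-measurable (in fact continuous) inverse; this is where the assumed continuity and strict monotonicity of $g$ are used, and it is entirely standard. Apart from that, the argument is a one-line translation through condition (iv), which is exactly why the paper omits the proof. No genuine obstacle is expected.
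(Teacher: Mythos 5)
Your proposal is correct and is exactly the route the paper intends: the paper omits the proof precisely because it is "immediate from condition (iv)" of Theorem~\ref{main_thm_arbitrary}, i.e.\ from the spectral-subspace reformulation you verify. One small caveat: since "increasing" here need not mean strictly increasing (the hypothesis $0<g(t)<1$ on $(0,1)$ would be redundant otherwise), $g$ need not be a bijection and the inverse function $g^{-1}$ may not exist; but reading $g^{-1}(\omega)$ as the set-theoretic preimage, continuity and monotonicity still give $\{t\in[0,1]\colon g(t)>\delta\}=(c,1]$ for some $c\in[0,1)$ and $\{t\colon g(t)>0\}=(0,1]$, so your computation of the spectral subspaces of $g(A)$ and hence the verification of condition (iv) go through unchanged.
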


For example, if $A$ is an asymptotic limit of a contraction, then $A^q$ is also an asymptotic limit of a contraction for every $0<q$.

\section{Contractions with coinciding asymptotic limits}

In this section we provide conditions for two contractions to have the same asymptotic limit. We show among others that a non-constant inner function of a completely non-unitary (c.n.u.) contraction $T$ has the same asymptotic limit as $T$. We recall that $T$ is a c.n.u. contraction, if only the zero subspace reduces $T$ to a unitary operator. In connection with the Sz.-Nagy--Foias functional calculus we refer to \cite{NFBK}. We relate also the asymptotic limit of the product of two contractions to the asymptotic limit of the contractions. First we consider the case when the contractions commute.

\begin{prop} \label{commuting_contr_aslim}
If $T_1, T_2\in\irB(\irH)$ are commuting contractions, then
\[ A_{T_1T_2}\leq A_{T_1} \text{ and } A_{T_1T_2}\leq A_{T_2}. \]
Consequently $\irH_0(T_1)\vee\irH_0(T_2)\subset\irH_0(T_1T_2)$.
\end{prop}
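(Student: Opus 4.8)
The plan is to exploit commutativity to rewrite the powers of the product, compare the relevant positive operators termwise, and then pass to the strong limit; the stable-subspace inclusion will then follow from Proposition 1(ii).

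First I would use that $T_1T_2=T_2T_1$ implies $(T_1T_2)^n=T_1^nT_2^n=T_2^nT_1^n$ for every $n\in\N$. Writing the product in the first way gives
\[ (T_1T_2)^{*n}(T_1T_2)^n = T_1^{*n}\big(T_2^{*n}T_2^n\big)T_1^n \leq T_1^{*n}T_1^n, \]
since $0\leq T_2^{*n}T_2^n\leq I$ ($T_2$ being a contraction) and $C^*BC\leq C^*C$ whenever $0\leq B\leq I$, with $C=T_1^n$. Writing the product as $T_2^nT_1^n$ yields symmetrically $(T_1T_2)^{*n}(T_1T_2)^n\leq T_2^{*n}T_2^n$.

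Next I would let $n\to\infty$. For each $i\in\{1,2\}$ and every $x\in\irH$ we have $\langle (T_1T_2)^{*n}(T_1T_2)^nx,x\rangle\leq\langle T_i^{*n}T_i^nx,x\rangle$ for all $n$; since both sequences converge (being monotone decreasing sequences of positive contractions, they converge in SOT, to $A_{T_1T_2}$ and $A_{T_i}$ respectively), passing to the limit gives $\langle A_{T_1T_2}x,x\rangle\leq\langle A_{T_i}x,x\rangle$, i.e. $A_{T_1T_2}\leq A_{T_1}$ and $A_{T_1T_2}\leq A_{T_2}$.

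For the consequence I would invoke Proposition 1(ii): $\irH_0(T_i)=\irN(A_{T_i})$. If $x\in\irH_0(T_1)$ then $\langle A_{T_1}x,x\rangle=0$, and $0\leq A_{T_1T_2}\leq A_{T_1}$ forces $\langle A_{T_1T_2}x,x\rangle=0$, hence $A_{T_1T_2}^{1/2}x=0$ and so $x\in\irN(A_{T_1T_2})=\irH_0(T_1T_2)$; the same argument applies to $\irH_0(T_2)$. As $\irH_0(T_1T_2)$ is a (closed) subspace, it contains $\irH_0(T_1)\vee\irH_0(T_2)$. I do not expect any genuine obstacle: the only mildly delicate point is that the strong limit of a monotone sequence of positive operators respects the order relation, which is immediate upon testing against vectors. (Alternatively, the inclusion of stable subspaces follows directly from $\|(T_1T_2)^nx\|=\|T_2^nT_1^nx\|\leq\|T_1^nx\|\to 0$, but deriving it from the operator inequality is cleaner.)
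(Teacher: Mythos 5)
Your proof is correct and follows essentially the same route as the paper: commutativity gives $(T_1T_2)^n=T_1^nT_2^n=T_2^nT_1^n$, the resulting termwise comparison with $T_i^{*n}T_i^n$ is passed to the SOT limit via quadratic forms, and the stable-subspace inclusion then follows from $\irN(A_{T_i})\subset\irN(A_{T_1T_2})$. The only cosmetic difference is that you state the operator inequality $C^*BC\leq C^*C$ before testing against vectors, whereas the paper compares $\|(T_1T_2)^nx\|^2\leq\|T_i^nx\|^2$ directly.
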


\begin{proof} For an arbitrary vector $x\in\irH$ and $i=1,2$, we have
\[ \big<A_{T_1T_2}x,x\big> = \lim_{n\to\infty}\big<(T_1T_2)^{*n}(T_1T_2)^{n}x,x\big> = \lim_{n\to\infty}\|(T_1T_2)^{n}x\|^2 \]
\[\leq \lim_{n\to\infty}\|T_i^{n}x\|^2 = \lim_{n\to\infty}\big<T_i^{*n}T_i^{n}x,x\big> = \big<A_{T_i}x,x\big>, \]
where we used the commuting property in the step $(T_1T_2)^n = T_1^nT_2^n = T_2^nT_1^n$.
\end{proof}

If $a\in\D$, then $b_a(z) := \frac{z-a}{1-\overline{a}z}$ is the so called Möbius transformation. It is a Riemann mapping from $\D$ onto itself. We use the notation $T_a:=b_a(T)$. It is easy to see that $b_{-a}(T_a) = b_{-a}(b_{a}(T)) = (b_{-a}\circ b_{a})(T) = T$.

\begin{lem} \label{cnu_Mobius}
If $T$ is a c.n.u. contraction, then $A_T=A_{T_a}$
\end{lem}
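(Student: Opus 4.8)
The plan is to prove $A_T \le A_{T_a}$ for \emph{every} $a\in\D$ (and, as it turns out, for an arbitrary contraction), and then to extract the reverse inequality from the involution identity $b_{-a}(T_a)=T$ recorded just before the lemma: applying the inequality already obtained to the contraction $T_a$ and the point $-a$ gives $A_{T_a}\le A_{b_{-a}(T_a)} = A_{b_{-a}(b_a(T))} = A_T$, so that $A_T = A_{T_a}$. (This route does not actually use that $T$ is c.n.u.; the hypothesis becomes essential only when $b_a$ is replaced by a general inner function in the sequel, since then the functional calculus is no longer the elementary one used below.)

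To obtain $A_T \le A_{T_a}$ I would push the defining intertwining relation $A_T^{1/2}T = V_T A_T^{1/2}$ on $\irR(A_T)^-$ through the rational function $b_a(z)=(z-a)(1-\overline{a}z)^{-1}$. Since $|a|<1$, both $I-\overline{a}T$ and $I-\overline{a}V_T$ are invertible, and from $A_T^{1/2}(I-\overline{a}T) = (I-\overline{a}V_T)A_T^{1/2}$ one deduces $A_T^{1/2}(I-\overline{a}T)^{-1} = (I-\overline{a}V_T)^{-1}A_T^{1/2}$, since an intertwiner of invertible operators also intertwines their inverses. Multiplying this with $A_T^{1/2}(T-aI)=(V_T-aI)A_T^{1/2}$ and using the commuting factorizations $b_a(T)=(T-aI)(I-\overline{a}T)^{-1}$ and $b_a(V_T)=(V_T-aI)(I-\overline{a}V_T)^{-1}$ yields
\[ A_T^{1/2}\,T_a \;=\; b_a(V_T)\,A_T^{1/2}. \]

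Next I would observe that $b_a(V_T)$ is an isometry on $\irR(A_T)^-$. Indeed, by the Wold decomposition $V_T$ is a unitary operator plus a number of copies of the unilateral shift; $b_a(\cdot)$ is again unitary on the unitary summand, and on each shift summand it is multiplication by the inner function $b_a$ on $H^2$, which is isometric. Hence $\big(A_T^{1/2}\colon \irH\to\irR(A_T)^-,\; b_a(V_T)\big)$ is a contractive isometric intertwining pair for $T_a$, and the universal property of the isometric asymptote $(X^+_{T_a},V_{T_a})$ furnishes a contraction $Z$ with $A_T^{1/2} = Z A_{T_a}^{1/2}$. Then, for every $x\in\irH$,
\[ \langle A_T x,x\rangle = \|A_T^{1/2}x\|^2 = \|Z A_{T_a}^{1/2}x\|^2 \le \|A_{T_a}^{1/2}x\|^2 = \langle A_{T_a}x,x\rangle, \]
which is exactly $A_T\le A_{T_a}$.

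I do not expect a serious obstacle: the only points needing care are the compatibility of the functional calculus with the intertwiner $A_T^{1/2}$ and the fact that $b_a$ sends isometries to isometries. The former is inexpensive because $b_a$ is rational with its single pole $1/\overline{a}$ lying outside $\overline{\D}$, so it reduces to the two elementary intertwining manipulations above rather than requiring the full Sz.-Nagy--Foias $H^\infty$-calculus; the latter is the Wold-decomposition remark. One may also note in passing that $T_a$ is c.n.u. whenever $T$ is, since any reducing subspace of $T_a$ reduces $T=b_{-a}(T_a)$ and a unitary summand of $T_a$ would produce one of $T$ --- though this is not needed for the M\"obius statement itself.
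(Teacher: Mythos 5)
Your proof is correct and follows essentially the same route as the paper: show $(A_T^{1/2}, b_a(V_T))$ is a contractive isometric intertwining pair for $T_a$, invoke the universal property of the isometric asymptote of $T_a$ to get $A_T\le A_{T_a}$, and then apply this inequality to $T_a$ with $-a$ via $b_{-a}(T_a)=T$ to obtain equality. The only difference is that you spell out the intertwining computation and the isometry of $b_a(V_T)$ (which the paper treats as obvious), and you correctly note in passing that the c.n.u.\ hypothesis is not actually needed for this M\"obius case.
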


\begin{proof}
Consider the realization of the isometric asymptotes $(X^+_T,V_T)$ and $(X^+_{T_a},V_{T_a})$ of $T$ and $T_a$, respectively, where $X_T^+x = A_T^{1/2}x$ and $X_{T_a}^+x = A_{T_a}^{1/2}x$ ($x\in\irH$). Obviously $(X^+_T,b_a(V_T))$ is a contractive intertwining pair for $T_a$, hence we have a unique contractive transformation $Z$ such that $ZV_{T_a} = b_a(V_T) Z$ and $X^+_T = ZX^+_{T_a}$. Since
\[ \big<A_Tx,x\big> = \big<X^+_Tx,X^+_Tx\big> = \|X^+_Tx\|^2 = \|ZX^+_{T_a}x\|^2 \leq \|X^+_{T_a}x\|^2 = \big<A_{T_a}x,x\big>, \]
for every $x\in\irH$, it follows that $A_{T} \leq A_{T_a}$. Then
\[ A_T \leq A_{T_a} \leq A_{(T_a)_{-a}} = A_T, \]
which gives what we wanted.
\end{proof}

Now we concentrate on inner functions of $T$.

\begin{thm}
If $u$ is a non-constant inner function and $T$ is a c.n.u. contraction, then $A_T = A_{u(T)}$.
\end{thm}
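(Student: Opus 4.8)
The plan is to prove the two inequalities $A_T\le A_{u(T)}$ and $A_{u(T)}\le A_T$ separately, and then combine them.

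For the lower bound $A_T\le A_{u(T)}$ I would imitate the proof of Lemma \ref{cnu_Mobius}. Let $(X_T^+,V_T)$ be the isometric asymptote of $T$ with $X_T^+h=A_T^{1/2}h$. Since $V_T$ is an isometry and $u$ is inner, $u(V_T)$ is again an isometry; this is clear from the Wold decomposition of $V_T$ once one recalls that, as $T$ is c.n.u., the unitary part of $V_T$ is absolutely continuous, so the boundary values of $u$ produce a genuine unitary operator on it. The relation $X_T^+T=V_TX_T^+$ then propagates through the functional calculus to $X_T^+u(T)=u(V_T)X_T^+$, so $(X_T^+,u(V_T))$ is a contractive isometric intertwining pair for $u(T)$. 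By the universal property of the isometric asymptote $(X_{u(T)}^+,V_{u(T)})$ of $u(T)$ there is a contraction $Z$ with $X_T^+=ZX_{u(T)}^+$, and evaluating at vectors gives $\langle A_Tx,x\rangle=\|X_T^+x\|^2\le\|X_{u(T)}^+x\|^2=\langle A_{u(T)}x,x\rangle$, i.e.\ $A_T\le A_{u(T)}$.

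For the upper bound $A_{u(T)}\le A_T$ the idea is to reduce, by a M\"obius twist of the \emph{values} of $u$, to a Blaschke product, and then to strip off a single Blaschke factor. Since $T$ is c.n.u., so is $u(T)$ (see \cite{NFBK}). By Frostman's theorem there is a point $a\in\D$ (indeed, all $a\in\D$ outside a set of logarithmic capacity zero) such that $v:=b_a\circ u$ is a Blaschke product; as $u$ is non-constant and $b_a$ is injective, $v$ is a non-constant Blaschke product, hence has a zero $c$, and we may write $v=b_c\cdot v'$ with $v'$ inner. Because $v(T)=b_a(u(T))=(u(T))_a$, Lemma \ref{cnu_Mobius} applied to the c.n.u.\ contraction $u(T)$ gives $A_{u(T)}=A_{(u(T))_a}=A_{v(T)}$. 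Now $v(T)=b_c(T)\,v'(T)=T_c\,v'(T)$ is a product of two commuting contractions, so Proposition \ref{commuting_contr_aslim} yields $A_{v(T)}\le A_{T_c}$, and Lemma \ref{cnu_Mobius} applied to $T$ gives $A_{T_c}=A_T$. Chaining these, $A_{u(T)}=A_{v(T)}\le A_{T_c}=A_T$, which together with the lower bound gives $A_{u(T)}=A_T$.

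I expect the main obstacle to lie in the two ingredients imported from the literature: that $u(T)$ is completely non-unitary (without which Lemma \ref{cnu_Mobius} cannot be invoked for $u(T)$), and Frostman's theorem, which is precisely what turns an arbitrary inner $u$ — including a zero-free singular one, the only case not already handled by peeling a Blaschke factor off $u$ itself — into a genuine Blaschke product. Everything else is a short assembly of Lemma \ref{cnu_Mobius}, Proposition \ref{commuting_contr_aslim}, and the universal property of the isometric asymptote. If one prefers not to quote that $u(T)$ is c.n.u., one may note instead that the proof of Lemma \ref{cnu_Mobius} works verbatim for an arbitrary contraction when the inner function involved is a M\"obius transform $b_a$ (since $b_a(V)=(V-a)(I-\overline a V)^{-1}$ is a well-defined isometry for every isometry $V$), which already delivers $A_{u(T)}=A_{(u(T))_a}$ directly.
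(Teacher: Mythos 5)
Your proof is correct, and half of it coincides with the paper's argument: the inequality $A_T \le A_{u(T)}$ is obtained exactly as in the paper, by checking that $(X_T^+,u(V_T))$ is a contractive isometric intertwining pair for $u(T)$ and invoking the universal property of the isometric asymptote --- in fact you justify the isometry of $u(V_T)$ (absolute continuity of the unitary part of $V_T$, coming from $T$ being c.n.u.) more explicitly than the paper does. For the reverse inequality $A_{u(T)} \le A_T$ you take a genuinely different, and heavier, route: Frostman's theorem to replace $u$ by a Blaschke product $v=b_a\circ u$, then peeling off a factor $b_c$ at a zero $c$ of $v$, so that $A_{u(T)}=A_{v(T)}\le A_{T_c}=A_T$ via Proposition \ref{commuting_contr_aslim} and two applications of Lemma \ref{cnu_Mobius}. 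The paper reaches the same inequality without Frostman by the elementary choice $a:=u(0)$: then $v=b_a\circ u$ vanishes at the origin, so $v=\chi w$ with $\chi(z)=z$ and $w$ inner, and $v(T)=T\,w(T)$ is already a commuting product containing $T$ itself, whence $A_{u(T)}=A_{v(T)}\le A_T$ directly; this disposes of the zero-free singular inner case (the one that forced you to Frostman) with no extra machinery and uses Lemma \ref{cnu_Mobius} only once, for $u(T)$. Both arguments share the same delicate point, namely applying Lemma \ref{cnu_Mobius} to $u(T)$, which requires either knowing that $u(T)$ is c.n.u.\ or your (correct) observation that the lemma's proof works for an arbitrary contraction because $b_a$ is rational with pole outside the closed disc; the paper leaves this implicit, so your explicit treatment of it is a small improvement.
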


\begin{proof}
Set $a := u(0)$ and $v = b_a\circ u$. Then obviously $v = \chi w$, where $\chi(z)=z$ and $w$ is an inner function. From Proposition \ref{commuting_contr_aslim} and Lemma \ref{cnu_Mobius} we get $A_{u(T)}=A_{v(T)}\leq A_T$. We consider isometric asymptotes $(X^+_T,V_T)$ and $(X^+_{u(T)},V_{u(T)})$ of $T$ and $u(T)$, respectively. The pair $(X^+_T,u(V_T))$ is a contractive intertwining pair of $u(T)$. Using the universal property of the isometric asymptote, we get a unique contractive transformation $Z$ such that $ZV_{u(T)} = u(V_T) Z$ and $X^+_T = ZX^+_{u(T)}$. The last equality implies $A_T \leq A_{u(T)}$, and so $A_T = A_{u(T)}$.
\end{proof}

For an alternative proof of the previous statement see Lemma III.1 in \cite{CassierFack}. It can be also derived from Theorem 2.3 in \cite{KerchyDouglasAlg}. The next theorem is a complement of Proposition \ref{commuting_contr_aslim} in a certain revise.

\begin{thm} \label{same_aslim}
Let $T_1, T_2$ be contractions such that $A_{T_1} = A_{T_2} = A$. Then $A \leq A_{T_1T_2}$.
\end{thm}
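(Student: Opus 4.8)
The plan is to use the universal (categorical) property of the isometric asymptote together with the hypothesis $A_{T_1}=A_{T_2}=A$, exactly in the spirit of Lemma \ref{cnu_Mobius} and the preceding theorem. Write $(X^+_{T_1},V_{T_1})$, $(X^+_{T_2},V_{T_2})$ and $(X^+_{T_1T_2},V_{T_1T_2})$ for the canonical realizations of the isometric asymptotes, so that $X^+_{T_i}h = A^{1/2}h$ for $i=1,2$ (using $A_{T_1}=A_{T_2}=A$) and $X^+_{T_1T_2}h = A_{T_1T_2}^{1/2}h$. The point of contact is that $A_{T_1T_2}^{1/2}$ is itself an intertwiner in a contractive intertwining pair, and one wants to produce, in the reverse direction, a contractive pair for $T_1T_2$ built out of $V_{T_1}$, $V_{T_2}$ and $A^{1/2}$.

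The key observation is that $\|A^{1/2}(T_1T_2)^n h\| = \|A^{1/2}T_1^n T_2^n h\|$? — no, $T_1$ and $T_2$ need not commute, so this is false. Instead I would argue as follows. Since $X^+_{T_2}T_2 = V_{T_2}X^+_{T_2}$, i.e. $A^{1/2}T_2 = V_{T_2}A^{1/2}$, and likewise $A^{1/2}T_1 = V_{T_1}A^{1/2}$, we should look for an operator that, composed with $A^{1/2}$, reproduces $A^{1/2}T_1T_2$. From $A^{1/2}T_1T_2 = V_{T_1}A^{1/2}T_2 = V_{T_1}V_{T_2}A^{1/2}$. Thus $(A^{1/2}, V_{T_1}V_{T_2})$ — with the product $V_{T_1}V_{T_2}$ acting on $\irR(A)^-$ — is a contractive pair intertwining $T_1T_2$ with the contraction $V_{T_1}V_{T_2}$, but that operator is a product of two isometries, hence an isometry. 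Concretely, set $V := V_{T_1}V_{T_2}$; then $V$ is an isometry on $\irR(A)^-$, $\|A^{1/2}\|\le 1$, and $A^{1/2}(T_1T_2) = V A^{1/2}$, so $(A^{1/2},V)$ is a contractive isometric intertwining pair for $T_1T_2$.

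Now apply the universal property of the isometric asymptote $(X^+_{T_1T_2},V_{T_1T_2})$ of $T_1T_2$: there is a unique contraction $Z$ with $ZV_{T_1T_2}=VZ$ and $A^{1/2} = Z X^+_{T_1T_2} = Z A_{T_1T_2}^{1/2}$. Hence for every $x\in\irH$,
\[ \langle Ax,x\rangle = \|A^{1/2}x\|^2 = \|ZA_{T_1T_2}^{1/2}x\|^2 \le \|A_{T_1T_2}^{1/2}x\|^2 = \langle A_{T_1T_2}x,x\rangle, \]
which gives $A\le A_{T_1T_2}$, as required.

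The only delicate point is verifying that $V_{T_1}V_{T_2}$ is a well-defined isometry on the same space $\irR(A)^-$: here one uses $\irR(A_{T_1})^- = \irR(A)^- = \irR(A_{T_2})^-$ (immediate from $A_{T_1}=A_{T_2}=A$), so both $V_{T_1}$ and $V_{T_2}$ act on $\irR(A)^-$, their product is defined there and is an isometry as a composition of isometries, and the identity $A^{1/2}T_1T_2 = V_{T_1}V_{T_2}A^{1/2}$ follows by applying the two intertwining relations in succession. I expect this bookkeeping to be the main (and only real) obstacle; once it is in place, the appeal to the universal property and the norm estimate are routine and parallel the earlier proofs in this section.
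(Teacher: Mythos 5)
Your proof is correct and follows essentially the same route as the paper: you form the contractive isometric intertwining pair $(A^{1/2},V_{T_1}V_{T_2})$ for $T_1T_2$ (using $A^{1/2}T_1T_2=V_{T_1}V_{T_2}A^{1/2}$) and invoke the universal property of the isometric asymptote of $T_1T_2$ to factor $A^{1/2}=ZA_{T_1T_2}^{1/2}$ with $\|Z\|\le 1$, giving $A\le A_{T_1T_2}$. This is exactly the paper's argument, with the intertwining identity and the final norm estimate spelled out in a bit more detail.
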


\begin{proof}
Set $X^+ \colon \irH\to\irR(A)^-$, where $X^+h := A^{1/2}h$, and consider the isometric asymptotes
\[(X^+, V_1), (X^+, V_2) \text{ and } (X^+_{T_1T_2},W)\]
of $T_1$, $T_2$ and $T_1T_2$, respectively. Obviously the pair $(X^+,V_1V_2)$ is a contractive intertwining pair for $T_1T_2$. Hence we get, from the universality property of the isometric asymptote, that there is a unique contractive $Z$ with the property $ZW = V_1V_2Z$ and $X^+ = ZX^+_{T_1T_2}$. Therefore $A \leq A_{T_1T_2}$.
\end{proof}

\begin{cor} \label{same_aslim_com}
When the contractions $T_1, T_2$ commute and $A_{T_1} = A_{T_2}$, then $A_{T_1T_2} = A_{T_1} = A_{T_2}$.
\end{cor}

\begin{proof}
This is an immediate consequence of Theorem \ref{same_aslim} and Proposition \ref{commuting_contr_aslim}.
\end{proof}

Concluding the paper we provide two examples. First we give two contractions $T_1,T_2\in C_{1\cdot}(\irH)$ such that $A_{T_1} = A_{T_2}$ and $A_{T_1T_2} \neq A_{T_1}$. This shows that Theorem \ref{same_aslim} cannot be strengthened to equality even in the $C_{1\cdot}$ case. By Corollary \ref{same_aslim_com} these contractions don't commute.

\begin{exmpl}
\textup{Take an orthonormal basis $\{e_{i,j}\colon i,j\in\N\}$ in $\irH$. The operators $T_1,T_2 \in\irB(\irH)$ are defined in the following way:}
\[ T_1 e_{i,j} := \left\{ \begin{matrix}
e_{i,j+1} & \text{if } j=1 \\
\frac{\sqrt{j^2-1}}{j} e_{i,j+1} & \text{if } j>1
\end{matrix}
\right., \quad T_2 e_{i,j} := \left\{ \begin{matrix}
e_{1,2} & \text{if } i=j=1 \\
e_{i+1,j-1} & \text{if } j=2 \\
\frac{\sqrt{3}}{2} e_{i-1,j+2} & \text{if } i>1, j=1 \\
\frac{\sqrt{j^2-1}}{j} e_{i,j+1} & \text{if } j>2
\end{matrix}
\right.. \]
\textup{$T_1$ and $T_2$ are orthogonal sums of infinitely many contractive, unilateral weighted shifts, with different shifting schemes. Straightforward calculations yield that}
\[ A_{T_1} e_{i,j} = A_{T_2} e_{i,j} = \left\{ \begin{matrix}
\frac{1}{2} e_{i,j} & \text{if } j = 1 \\
\frac{j-1}{j} e_{i,j} & \text{if } j > 1 
\end{matrix}
\right., \]
\textup{for every $i,j\in\N$, since $\Big(\prod_{l=j}^\infty \frac{\sqrt{l^2-1}}{l}\Big)^2 = \frac{j-1}{j}$ for $j>1$. On the other hand}
\[ T_2T_1 e_{i,j} = \left\{ \begin{matrix}
e_{i+1,1} & \text{if } j=1 \\
\frac{\sqrt{j^2-1}}{j}\frac{\sqrt{(j+1)^2-1}}{j+1}e_{i,j+2} & \text{if } j>1
\end{matrix}
\right., \]
\textup{hence}
\[ A_{T_2T_1} e_{i,j} = \left\{ \begin{matrix}
e_{i,1} & \text{if } j=1 \\
\frac{j-1}{j}e_{i,j} & \text{if } j>1
\end{matrix}
\right.,\]
\textup{since} 
\[ \bigg(\prod_{m=0}^\infty \frac{\sqrt{(j+2m)^2-1}}{j+2m}\frac{\sqrt{(j+1+2m)^2-1}}{j+1+2m}\bigg)^2 = \bigg(\prod_{l=j}^\infty \frac{\sqrt{l^2-1}}{l}\bigg)^2 = \frac{j-1}{j}.\] 
\textup{Therefore $A_{T_1}\leq A_{T_1T_2}$ and $A_{T_1}\neq A_{T_1T_2}$.}
\end{exmpl}

Finally, we give two contractions $T_1,T_2\in C_{0\cdot}(\irH)$ such that $T_1T_2\in\irC_{1\cdot}(\irH)$.

\begin{exmpl}
\textup{Take the same orthonormal basis in $\irH$ as in the previous example. The $C_{0\cdot}$-contractions $T_1,T_2\in\irB(\irH)$ are defined by}
\[ T_1 e_{i,j} := \frac{\sqrt{(i+1)^2-1}}{i+1} e_{i,j+1}, \qquad 
T_2 e_{i,j} := \left\{ \begin{matrix}
0 & \text{if } j=1 \\
e_{i-1,j+1} & \text{if } j>1
\end{matrix} \right.. \]
\textup{By a straightforward calculation we can check that}
\[ T_2T_1 e_{i,j} = \frac{\sqrt{(i+1)^2-1}}{i+1} e_{i+1,j}, \]
\textup{and so}
\[ A_{T_2T_1} e_{i,j} = \frac{i}{i+1} e_{i,j}. \]
\textup{Thus $T_1T_2\in C_{1\cdot}(\irH)$.}
\end{exmpl}

\noindent\textit{\textbf{Acknowledgement.}} The author is very grateful to professor {\sc L. Kérchy} for his useful suggestions. Also many thanks to {\sc A. F. M. ter Elst} for recommending the articles \cite{terElst} and \cite{Luft}.

Bolyai Institute, University of Szeged

Aradi vértanúk tere 1

H-6720, Szeged, HUNGARY

{\sc e-mail}: gehergy@math.u-szeged.hu


\begin{thebibliography}{amsalpha}
\bibitem[CF]{CassierFack} {\sc G. Cassier} and {\sc T. Fack}, Contractions in Von Neumann Algebras, \textit{J. Functional Analysis} \textbf{135} (1996), 297--338.

\bibitem[E]{terElst} {\sc A. F. M. ter Elst}, Antinormal operators, \textit{Acta Sci. Math. (Szeged)}, \textbf{54} (1990), 151--158.

\bibitem[K1]{KerchyDouglasAlg} {\sc L. Kérchy}, Quasianalytic contractions and function algebras, \textit{Indiana Univ. Math. J.}, \textbf{60} (2011), 21--40.

\bibitem[K2]{KerchyUnitaryAs} {\sc L. Kérchy}, Unitary asymptotes and quasianalycity, \textit{submitted}.

\bibitem[Ku]{Kubrusly} {\sc C. S. Kubrusly}, \textit{An Introduction to Models and Decompositions in Operator Theory}, Birkhäuser, 1997.

\bibitem[L]{Luft} {\sc E. Luft}, The two-sided closed ideals of the algebra of bounded linear operators of a Hilbert space, \textit{Czechoslovak Math. J.}, \textbf{18} (1968), 595--605.

\bibitem[NFBK]{NFBK} {\sc B. Sz-Nagy, C. Foias, H. Bercovici} and {\sc L. Kérchy}, \textit{Harmonic Analysis of Operators on Hilbert Space}, Seconf Edition, Springer, 2010.
\end{thebibliography}
\end{document}